\tikzstyle{bsq}=[rectangle, draw, thick, minimum width=1cm, minimum height=1cm]
\tikzstyle{bver}=[rectangle, draw, thick, minimum width=1cm, minimum height=2cm]
\tikzstyle{bhor}=[rectangle, draw, thick, minimum width=2cm, minimum height=1cm]
\newtheorem{theorem}{Theorem}[section]
\newtheorem{definition}[theorem]{Definition}
\newtheorem{lemma}[theorem]{Lemma}
\newtheorem{corollary}[theorem]{Corollary}
\newtheorem{proposition}[theorem]{Proposition}
\newtheorem{varexample}[theorem]{Example}
\newtheorem*{TropicalRR}{Tropical Riemann-Roch Theorem}
\theoremstyle{definition}
\newtheorem{remark}[theorem]{Remark}
\newtheorem*{BNthm}{Brill-Noether Theorem}
\newtheorem*{GPthm}{Gieseker-Petri Theorem}
\newcommand{\Spec}{\mathrm{Spec}\,}
\newcommand{\RR}{\mathbb{R}}
\newcommand{\ZZ}{\mathbb{Z}}
\newcommand{\cG}{\mathcal{G}}
\newcommand{\cL}{\mathcal{L}}
\newcommand{\cO}{\mathcal{O}}
\newcommand{\cW}{\mathcal{W}}
\newcommand{\cX}{\mathcal{X}}
\newcommand{\ocX}{\overline{\mathcal{X}}}
\newcommand{\ord}{\operatorname{ord}}
\newcommand{\Trop}{\operatorname{Trop}}
\newcommand{\trop}{\operatorname{trop}}
\newcommand{\ddiv}{\operatorname{div}}
\newcommand{\Div}{\operatorname{Div}}
\newcommand{\PL}{\operatorname{PL}}
\newcommand{\val}{\operatorname{val}}
\newcommand{\Pic}{\operatorname{Pic}}
\newcommand{\br}{\mathrm{br}}
\newcommand{\an}{\mathrm{an}}
\begin{document}
\title[Tropical independence I: Shapes of divisors and Gieseker-Petri]{Tropical independence I: Shapes of divisors and a proof of the {G}ieseker-{P}etri {T}heorem}
\author{David Jensen}
\author{Sam Payne}\thanks{Supported in part by NSF grants DMS--1068689 and CAREER DMS--1149054.}
\date{}
\bibliographystyle{alpha}

\maketitle

\begin{abstract}
We develop a framework to apply tropical and nonarchimedean analytic methods to multiplication maps for linear series on algebraic curves, studying degenerations of these multiplications maps when the special fiber is not of compact type.  As an application, we give a new proof of the Gieseker-Petri Theorem, including an explicit tropical criterion for a curve over a valued field to be Gieseker-Petri general.
\end{abstract}

\section{Introduction}

Classical Brill-Noether theory studies the schemes $\cG^r_d (X)$ parameterizing linear series of degree $d$ and rank $r$ on a smooth curve $X$ of genus $g$.  The Brill-Noether number $\rho(g,r,d) = g - (r+1)(g-d+r)$ is a naive dimension estimate for $\cG^r_d(X)$, and the following two fundamental results give the local structure of these schemes when the curve is general in its moduli space.

\begin{BNthm} \cite{GriffithsHarris80}
Let $X$ be a general curve of genus $g$.   Then $\cG^r_d (X)$ has pure dimension $\rho(g,r,d)$, if this is nonnegative, and is empty otherwise.
\end{BNthm}

\begin{GPthm} \cite{Gieseker82}
Let $X$ be a general curve of genus $g$.  Then $\cG^r_d (X)$ is smooth.
\end{GPthm}

\noindent The Zariski tangent space to $\cG^r_d (X)$ at a linear series $W \subset \cL(D_X)$ has dimension $\rho(g,r,d) + \dim \ker \mu_W$, where
\[
\mu_W : W \otimes \cL(K_X - D_X) \to \cL( K_X )
\]
is the adjoint multiplication map.  In particular, $\cG^r_d(X)$ is smooth of dimension $\rho(g,r,d)$ at a linear series $W$ if and only if the multiplication map $\mu_W$ is injective \cite[\S IV.4]{ACGH}.

Gieseker's original proof that $\mu_W$ is injective for all $W$ when $X$ is general involves a subtle degeneration argument.  Eisenbud and Harris developed a more systematic method for studying limits of linear series for one-parameter degenerations of curves in which the special fiber has compact type, and applied this theory to give a simpler proof of the Gieseker-Petri Theorem \cite{EisenbudHarris83c, EisenbudHarris86}.  Lazarsfeld gave another proof, without degenerations, using vector bundles on K3 surfaces \cite{Lazarsfeld86}.

Here, we give a new proof of the Gieseker-Petri Theorem, using a different class of degenerations, where the special fiber is not of compact type.  Our arguments are based in tropical geometry and Berkovich's theory of nonarchimedean analytic curves and their skeletons.

\bigskip

Let $\Gamma$ be a chain of $g$ loops connected by bridges, with generic edge lengths.

\begin{figure}[H] \label{Fig:ChainOfLoops}
\begin{tikzpicture}

\draw [ball color=black] (-1.7,-0.45) circle (0.55mm);
\draw (-1.95,-0.65) node {\footnotesize $v_1$};
\draw (-1.5,0) circle (0.5);
\draw (-1,0)--(0,0.5);
\draw [ball color=black] (-1,0) circle (0.55mm);
\draw (-0.85,0.3) node {\footnotesize $w_1$};
\draw (0.7,0.5) circle (0.7);
\draw (1.4,0.5)--(2,0.3);
\draw [ball color=black] (1.4,0.5) circle (0.55mm);
\draw [ball color=black] (0,0.5) circle (0.55mm);
\draw (-0.2,0.75) node {\footnotesize $v_2$};
\draw (2.6,0.3) circle (0.6);
\draw (3.2,0.3)--(3.87,0.6);
\draw [ball color=black] (2,0.3) circle (0.55mm);
\draw [ball color=black] (3.2,0.3) circle (0.55mm);
\draw [ball color=black] (3.87,0.6) circle (0.55mm);
\draw (4.5,0.3) circle (0.7);
\draw (5.16,0.5)--(5.9,0);
\draw (6.4,0) circle (0.5);
\draw [ball color=black] (5.16,0.5) circle (0.55mm);
\draw (5.48,0.74) node {\footnotesize $w_{g-1}$};
\draw [ball color=black] (5.9,0) circle (0.55mm);
\draw [ball color=black] (6.9,0) circle (0.55mm);
\draw (5.7,-.2) node {\footnotesize $v_g$};
\draw (7.3,-.2) node {\footnotesize $w_g$};

\draw [<->] (3.3,0.4) arc[radius = 0.715, start angle=10, end angle=170];
\draw [<->] (3.3,0.2) arc[radius = 0.715, start angle=-9, end angle=-173];

\draw (2.5,1.25) node {\footnotesize$\ell_i$};
\draw (2.75,-0.7) node {\footnotesize$m_i$};
\end{tikzpicture}
\caption{The graph $\Gamma$.}
\end{figure}
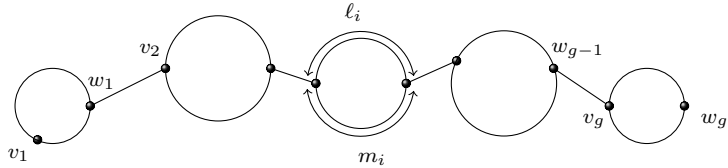

\noindent The genericity condition on edge lengths on the loops is the same as in \cite{tropicalBN}; we require that $\ell_i / m_i$ is not equal to the ratio of two positive integers whose sum is less than or equal to $2g-2$.

\begin{theorem}
\label{Thm:MainThm}
Let $X$ be a smooth projective curve of genus $g$ over a complete nonarchimedean field such that the minimal skeleton of the Berkovich analytic space $X^\an$ is isometric to $\Gamma$.  Then the multiplication map
$$\mu_W : W \otimes \cL(K_X - D_X) \to \cL(K_X ) $$
is injective for all linear series $W \subset \cL (D_X)$ on $X$.
\end{theorem}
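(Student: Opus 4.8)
The strategy is to argue by contradiction, specializing the multiplication map to the metric graph $\Gamma$ and then reducing to a combinatorial statement about piecewise linear functions. Suppose $\mu_W$ has a nonzero kernel, and among all nonzero elements of $\ker \mu_W$ choose $\sum_{i=0}^n f_i \otimes g_i$ with the number of nonzero terms minimal. Then $f_0, \dots, f_n \in W \subseteq \cL(D_X)$ are linearly independent, each $g_i \in \cL(K_X - D_X)$ is nonzero, and $\sum_{i=0}^n f_i g_i = 0$ in $\cL(K_X)$. Since multiplicativity of the valuation gives $\trop(f_i g_i) = \trop(f_i) + \trop(g_i)$, and since the vanishing of the sum forces, at every point of $\Gamma$, the minimum of the values $\trop(f_i g_i)$ to be attained at least twice, the piecewise linear functions $\psi_i := \trop(f_i) + \trop(g_i)$ on $\Gamma$ are tropically dependent in this sense.

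The next step is divisorial bookkeeping. Let $D = \trop(D_X)$, a divisor of degree $d = \deg D_X$ on $\Gamma$; Baker's specialization lemma gives $r_\Gamma(D) \ge r$. Each $\trop(f_i)$ determines an effective divisor $D_i = D + \ddiv(\trop f_i)$ in the linear system $|D|$ on $\Gamma$, each $\trop(g_i)$ an effective divisor $E_i = (K_\Gamma - D) + \ddiv(\trop g_i)$ in $|K_\Gamma - D|$, and the relation $\ddiv(\psi_i) + K_\Gamma = D_i + E_i \ge 0$ exhibits $D_i + E_i$ as an effective canonical divisor. The crucial input is the classification of the admissible shapes of such divisors on the chain of loops with generic edge lengths: combined with the Brill-Noether analysis of $\Gamma$ from \cite{tropicalBN}, the genericity hypothesis on the ratios $\ell_i / m_i$ forces the reduced representatives of divisors in $|D|$, in $|K_\Gamma - D|$, and in $|K_\Gamma|$ to be rigid, placing a prescribed number of chips on each loop and on each bridge at positions determined by the class.

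With these constraints in place I would analyze the tropical dependence one loop at a time along the chain. For each index $i$ set $A_i = \{ p \in \Gamma : \psi_i(p) = \min_j \psi_j(p) \}$; these are closed subgraphs whose union is $\Gamma$, with every point contained in at least two of them, and the local behavior of $A_i$ along $\Gamma$ is controlled by the chips of the divisor $D_i + E_i$. On each bridge every $\psi_i$ is affine, so the double-covering requirement together with the generic edge lengths pins down relations among their slopes; on each loop, $D_i + E_i$ has only a bounded number of chips, so each $\psi_i$ has only a few break points there, and the combination of the double-covering condition with the rigid shapes of $D_i \in |D|$ and $E_i \in |K_\Gamma - D|$ severely restricts how the $\psi_i$ may differ. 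Propagating this analysis through all $g$ loops and across the intervening bridges, and using the genericity of the edge lengths to exclude the borderline configurations, I expect to show that no such doubly-attained minimum can be sustained along the whole chain among $n+1 \ge 2$ functions of the form $\trop(f_i) + \trop(g_i)$ with the $f_i$ independent and all $g_i$ nonzero — contradicting the existence of the minimal relation, and hence proving that $\mu_W$ is injective.

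The heart of the argument, and the step I expect to be by far the most delicate, is this loop-by-loop bookkeeping: one must keep simultaneous track of the shapes of $D_i$, $E_i$, and $D_i + E_i$, follow how each locus $A_i$ enters and exits a loop through its two bridge points, and verify that the requirement of a doubly-attained lower envelope is globally inconsistent with the allowed shapes. The genericity condition that $\ell_i / m_i$ is not a ratio of two positive integers with sum at most $2g-2$ is precisely what rigidifies these shapes enough for the bookkeeping to close; carrying out the full enumeration of admissible shapes of divisors and their interaction with the tropical dependence is the technical core on which the proof rests.
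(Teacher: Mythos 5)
Your opening reductions are sound: extracting a minimal-length element $\sum f_i \otimes g_i$ of the kernel, noting that $\sum f_i g_i = 0$, and concluding that the minimum of the $\trop(f_i) + \trop(g_i)$ is attained at least twice at every point of $\Gamma$ is all correct, and the divisorial bookkeeping $D_i + E_i \in |K_\Gamma|$ is exactly the right object to track. But the core of your proposal rests on a claim that fails precisely in the cases that make the theorem hard, namely when $\rho(g,r,d) > 0$. You assert that the genericity of the edge lengths ``forces the reduced representatives of divisors in $|D|$, in $|K_\Gamma - D|$, and in $|K_\Gamma|$ to be rigid, placing a prescribed number of chips on each loop and on each bridge at positions determined by the class.'' That rigidity is a feature of the $\rho = 0$ case only, where the relevant Brill-Noether loci are finite and explicitly classified by tableaux; when $\rho > 0$ the tropical linear systems $|D|$ and $|K_\Gamma - D|$ have positive-dimensional families of effective divisors with many possible shapes, and the loop-by-loop enumeration you describe has no finite list of shapes to enumerate. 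There is a second, independent obstruction: even granting some control over $R(\Trop(D_X))$, one has almost no control over which piecewise linear functions in it actually arise as $\trop(f)$ for $f \in \cL(D_X)$, and your argument hinges on the shapes of the specific $\trop(f_i)$ and $\trop(g_i)$ from your fixed minimal relation. The paper explicitly remarks that when $\rho > 0$ it is \emph{not known} whether any single pair of bases for $\cL(D_X)$ and $\cL(K_X - D_X)$ yields tropically independent products, so an argument pinned to one fixed decomposition of one kernel element cannot be pushed through by shape bookkeeping alone.

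The paper sidesteps both issues with machinery your proposal does not anticipate. For each loop index $i$ it constructs a \emph{different} basis of $\cL(D_X)$ and of $\cL(K_X - D_X)$ adapted to that loop --- Lemma~\ref{Lem:Basis}, an existence argument via orthogonal bases over a spherically complete field, in the spirit of Eisenbud--Harris --- so that the reductions at the node $x_i$ have distinct orders of vanishing. Thuillier's nonarchimedean Poincar\'e--Lelong formula then translates those vanishing orders into distinct incoming slopes of the tropicalizations along the bridge $e_i$, which is the mechanism (Proposition~\ref{Prop:ChipsOnEachLoop}) that forces at most one basis divisor to skip $\gamma_i$; this is the rigidity you wanted, but it comes from algebraic geometry over the residue field plus the bridges, not from tropical classification. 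Fixing one kernel element, one gets $g$ different pointwise-minimum functions $\theta_1, \ldots, \theta_g$ (one per basis choice), and the genuinely delicate new step is the \emph{patching} of these into a single $\theta \in \PL(\Gamma_V)$: one must show $s_i(\theta_{i-1}) \geq s_i(\theta_i)$ so that gluing introduces no poles at the vertices $v_i$, which uses Proposition~\ref{Prop:SumEqualsMin} and a two-step change of basis across each bridge. Only then does Proposition~\ref{Prop:Obstruction} apply loop-by-loop to place a chip of $D + E + \ddiv(\theta)$ in every $\gamma_i$, contradicting Lemma~\ref{Lem:CanonicalDivisorsSkipLoops}. So the overall proof-by-contradiction shape you propose matches the paper, but the ingredients that actually make it run --- adapted bases per loop, Poincar\'e--Lelong on the bridges, and the patching construction --- are absent from your sketch, and without them the loop-by-loop bookkeeping does not close when $\rho > 0$.
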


\noindent There do exist such curves over valued fields of arbitrary pure or mixed characteristic.  This follows from the fact that the moduli space of tropical curves is the skeleton of the Deligne-Mumford compactification of the moduli space of curves \cite{acp}, and can also be proved by deformation theory, as in \cite[Appendix~B]{Baker08}.  The existence of Gieseker-Petri general curves over an arbitrary algebraically closed field then follows by standard arguments from scheme theory, using the fact that the coarse moduli space of curves is defined over $\Spec \ZZ$, as in \cite[Section~3]{tropicalBN}.  In particular, the Gieseker-Petri Theorem follows from Theorem~\ref{Thm:MainThm}, by standard arguments.
\bigskip

The proof of Theorem~\ref{Thm:MainThm} is essentially independent of the tropical proof of the Brill-Noether Theorem and does not involve the combinatorial classification of special divisors on a chain of loops from \cite{tropicalBN}.  (In Section~\ref{Sec:RhoZero}, we give a simplified proof in the special case where $\rho(g,r,d)$ is zero, which does use this classification; see Remark~\ref{Rmk:SpecialCase}.) Our approach involves not only the distribution of degrees over components of the special fiber, but also algebraic geometry over the residue field.  In particular, we use Thuillier's nonarchimedean analytic Poincar\'e-Lelong formula \cite{ThuillierThesis, BPR11}, which relates orders of vanishing at nodes in the special fiber of a semistable model to slopes of piecewise linear functions on the skeleton. The resulting interplay between tropical geometry and algebraic linear series is close in spirit to the important recent work of Amini and Baker on linear series on metrized complexes of curves \cite{AminiBaker12}, which was a source of inspiration.


\begin{remark}
The graph $\Gamma$ differs from the chain of loops studied in \cite{tropicalBN} only by the addition of bridges between the loops.  The tropical Jacobians of two graphs that differ by the addition or deletion of bridges are canonically isomorphic, and these isomorphisms respect the images of the Abel-Jacobi maps, so the Brill-Noether theory of $\Gamma$ is the same as that of the chain of loops.  See \cite{LPP12, Len14} for the basics of tropical Brill-Noether theory.

We do not need to introduce bridges for the case where $\rho(g,r,d)$ is zero; the arguments in Section~\ref{Sec:RhoZero} work equally well for a chain of loops without bridges.  However, when $\rho(g,r,d)$ is positive we need to relate the slopes of piecewise linear functions along the bridge edges to orders of vanishing at nodes in the special fiber, through the nonarchimedean Poincar\'e-Lelong formula, in order to produce bases for the algebraic linear series $\cL(D_X)$ with the required properties.  In particular, we do not know whether the conclusion of Theorem~\ref{Thm:MainThm} holds for chains of loops without bridges when $\rho(g,r,d)$ is positive.
\end{remark}

On the way to proving Theorem~\ref{Thm:MainThm}, we introduce some new techniques for working with tropical linear series and relating them to algebraic linear series.  In Section~\ref{Sec:Independence}, we present a notion of \emph{tropical independence}, which gives a sufficient condition for linear independence of rational functions on an algebraic curve $X$ in terms of the associated piecewise linear functions on the Berkovich skeleton of the analytic curve $X^\an$.   The key to applying such an independence condition is to produce well-understood piecewise linear functions on the skeleton that are not only in the tropical linear series, but are in fact tropicalizations of rational functions in a given algebraic linear series.  In the case where $\rho(g,r,d)$ is zero, the necessary piecewise linear functions come from tropicalizing a basis for the linear series and a basis for the adjoint linear series.  In this case, the piecewise linear functions are explicit and uniquely determined by the graph, and the proof that they all come from the algebraic linear series is essentially combinatorial.  (See Proposition~\ref{Prop:BasisDivisors}.)  When $\rho$ is positive, we have much less control over which tropical functions come from a given algebraic linear series.  In the general case, we work one loop at a time on the metric graph and use an existence argument from algebraic geometry, inspired by \cite[Lemma~1.2]{EisenbudHarris83c}.  (See Lemma~\ref{Lem:Basis}.)

One new insight on the tropical side is the importance of \emph{shapes} of effective divisors, expressed in terms of connected subsets that do or do not meet the divisor. When the metric graph is a chain of loops, a typical connected subset to consider would be a loop minus a single point.  See Sections~\ref{sec:Obstructions} and \ref{sec:CanonicalShapes}, along with the proofs of Theorems~\ref{Thm:RhoEqualsZero} and \ref{Thm:MainThm}, at the ends of Sections~\ref{Sec:RhoZero} and \ref{Section:MainResults}, respectively.

We also use a new \emph{patching} construction, gluing together tropicalizations of different rational functions in a fixed algebraic linear series on different parts of the graph, to arrive at a piecewise linear function in the corresponding tropical linear series that may or may not come from any linear combination of the original rational functions.  See the construction of $\theta$ at the beginning of the proof of Theorem~\ref{Thm:MainThm}.  The most delicate step in this construction is to ensure that no poles are introduced at the gluing points.

\bigskip

We now briefly sketch relations between the approach developed here, the classical theory of limit linear series, and the tropical theory of divisors on graphs.

\bigskip

Suppose $X$ is defined over a discretely valued field with valuation ring $R$, and let $L$ be a line bundle on $X$.  Consider a regular model $\cX$ over $\Spec R$ with general fiber $X$, in which the special fiber $\ocX$ is semistable with smooth components $\ocX_i$.  (By the semistable reduction theorem, such a model exists after a finite, totally ramified extension of the valued field.) The special fiber of this model has compact type, meaning that its Jacobian is compact, if and only if its dual graph is a tree.  In this case, for each component $\ocX_i$ there is a unique extension $\cL_i$ of the line bundle $L$ such that
\[
\deg\big(\cL_i|_{{\ocX}_j} \big)= \left\{ \begin{array}{ll} d & \mbox{ if } i = j, \\ 0 & \mbox{ otherwise.} \end{array} \right.
\]
Given a linear subspace $W \subset H^0(X,L)$ of degree $d$ and dimension $r+1$, the $R$-submodule $\cW_i \subset W$ consisting of sections that extend to $\cL_i$ is free of rank $r +1$, and restricts to a linear series of degree $d$ and dimension $r$ on $\ocX_i$.  The theory of limit linear series studies these distinguished linear series on the components of the special fiber, with special attention to their vanishing sequences at the nodes of $\ocX$.

In contrast, if $\ocX$ is not of compact type, then its dual graph is not a tree, and there is an obstruction to extending $L$ to a line bundle $\cL_i$ with degrees as above on the components of the special fiber.  This obstruction is given by an element in the component group of the N\'eron model of the Jacobian of $X$.

The theory of divisors on graphs follows a deep analogy between divisors on algebraic curves and the distributions of degrees of specializations of $L$ over the components of the special fiber. In this framework, one considers the dual graph whose vertices $v_i$ correspond to components $\ocX_i$ and whose edges correspond to nodes of $\ocX$.  Then an extension $\cL$ of $L$ to $\cX$ gives rise to a formal sum
\[
D_\cL = \sum_i \deg(\cL|_{\ocX_i}) v_i.
\]
which is considered as  a divisor on the graph.  Since the divisors arising from different specializations of  $L$ differ by a sequence of chip-firing moves, one studies the tropical Picard group parametrizing equivalence classes of divisors on the graph modulo the relation generated by chip-firing.  The tropical Jacobian, the degree zero part of this tropical Picard group, is canonically identified with the component group of the N\'{e}ron model of the Jacobian of $X$.

Baker's Specialization Lemma \cite{Baker08} says that a line bundle whose complete linear series has dimension $r$ can be specialized so that all degrees are nonnegative and the distribution of degrees dominates any given divisor of degree $r$ on the dual graph.  In other words, it has rank at least $r$ in the sense of \cite{BakerNorine07}.  Therefore, the specialization of any line bundle whose complete linear series has dimension at least $r$ lies in the tropical Brill-Noether locus parametrizing divisor classes of degree $d$ with rank at least $r$.  In \cite{tropicalBN}, a careful analysis of the Brill-Noether loci of the chain of loops shows that if a curve $X$ has a regular semistable model whose special fiber has this dual graph, then the curve must be Brill-Noether general, meaning that $\cG^r_d(X)$ has dimension $\rho(g,r,d)$ if this is non-negative, and is empty otherwise.   In particular, we get not only a new proof of the Brill-Noether Theorem, but an explicit and computationally verifiable sufficient condition for a curve to be Brill-Noether general, the existence of a regular semistable model whose special fiber has a particular dual graph.

\begin{remark}
This tropical proof of the Brill-Noether Theorem  can be reframed in the language of Berkovich's nonarchimedean analytic geometry to show that any curve of genus $g$ over a valued field whose skeleton is a chain of $g$ loops with generic edge lengths must be Brill-Noether general. Here, we follow this more general approach, with skeletons of analytifications in place of dual graphs of regular semistable models.  Similar arguments, combined with the basepoint-free pencil trick, lead to a proof of the Gieseker-Petri Theorem in the special case where $r = 1$ \cite{BJMNP}.
\end{remark}

\begin{remark}
In some ways, the tropical geometry of divisors on a chain of loops with generic edge lengths appears similar to the geometry of limit linear series on a chain of elliptic curves with generic attaching points.  As is well-known to experts in Brill-Noether theory, the theory of limit linear series on such curves gives a characteristic-free proof of the Brill-Noether and Gieseker-Petri theorems \cite{Osserman11, CLMTiB12}, and some steps in our approach, including Lemma~\ref{Lem:Basis} and Proposition~\ref{Prop:ChipsOnEachLoop}, can be viewed as tropical analogues of such arguments from classical algebraic geometry.

Other steps seem more difficult to translate. In the limit linear series proofs of Gieseker-Petri, both \cite{EisenbudHarris83c} and \cite{CLMTiB12} assume the multiplication map is not injective and use a degeneration argument to construct a divisor in $|K_X|$ of \emph{impossible degree}.  We assume the multiplication map is not injective and reach a contradiction by constructing an impossible divisor in $|K_\Gamma|$, but it is not the degree of this divisor that creates the contradiction.  Our argument relies on Proposition~\ref{Prop:Obstruction} and Lemma~\ref{Lem:CanonicalDivisorsSkipLoops} to show that the divisor has \emph{impossible shape}.

The relations to the geometry of the Deligne-Mumford compactification of $\mathcal{M}_g$ are also different.  Limit linear series arguments produce stable curves corresponding to points in the boundary of $\overline{\mathcal{M}}_g$ that are not in the closure of the Gieseker-Petri special locus, whereas the special fibers of our models are semistable, but necessarily unstable, and their stabilizations are always in the closure of the hyperelliptic locus.  (Limit linear series arguments may also involve semistable curves that are not stable, but the configurations of rational curves collapsed by stabilization tend to play an incidental role.  In sharp contrast, the precise combinatorial configurations of collapsed curves are essential to our arguments.)

It may still be tempting to try to interpret the tropical approach as a rephrasing or retranslation of classical degeneration arguments, at least in broad strokes, but there are fundamental obstacles to overcome.  As explained above, the data in our tropical arguments are in some sense strictly complementary to the data involved in limit linear series.  We work primarily in the component group of the N\'eron model of the Jacobian (or its analytic counterpart, the tropical Jacobian) whereas classical limit linear series are defined only in the case where this component group is trivial.  On the other hand, the limit linear series approach depends on computations in the compact part of the Jacobian of the special fiber, which is trivial in the cases we consider.

Finally, we note that even the Tropical Riemann-Roch Theorem has not been reinterpreted or reproved using classical algebraic geometry, despite multiple attempts.  Our proof of Gieseker-Petri uses this result in a crucial way, to control the shapes of effective canonical divisors (Lemma~\ref{Lem:CanonicalDivisorsSkipLoops}), so any satisfying interpretation of our argument in terms of classical degeneration methods should explain Tropical Riemann-Roch as well.
\end{remark}

\begin{remark}  \label{Rmk:SpecialCase}
In Section~\ref{Sec:RhoZero}, we give a simplified proof of Theorem~\ref{Thm:MainThm} in the special case where $\rho(g,r,d)$ is zero.  The simplified argument in this special case is essentially combinatorial, and relies on the classification of special divisors on a chain of loops in terms of rectangular tableaux \cite{tropicalBN} and the interpretation of adjunction in terms of transposition \cite{AMSW}.  It does not involve algebraic geometry over the residue field or the Poincar\'e-Lelong formula.

Although the guts of the argument are different, the overall structure of the proof by contradiction is the same as in the general case.  We assume that the multiplication map has nonzero kernel, deduce that certain carefully constructed collections of piecewise linear functions are tropically dependent, and use this dependence to produce a canonical divisor of impossible shape.  Although this section is not logically necessary, we believe that most readers will find it helpful to work through this special case first, as we did, before proceeding to the proof of Theorem~\ref{Thm:MainThm}.
\end{remark}

\subsection*{Acknowledgements}
We are grateful to Eric Katz and Joe Rabinoff for helpful conversations related to this work, to Dhruv Ranganathan for assistance with the illustrations, and to Matt Baker and the referee helpful comments on an earlier version of this draft that led to several improvements.  Important parts of this research were carried out during a week at Canada/USA Mathcamp in July 2013, supported by research in pairs grant NSF DMS-1135049.  We are grateful to the staff and students for their enthusiasm and warm hospitality.

\section{Background}
\label{Section:Background}

We briefly review the theory of divisors and divisor classes on metric graphs, along with relations to the classical theory of algebraic curves via Berkovich analytification and specialization to skeletons.  For further details and references, see \cite{BakerNorine07, Baker08, BPR11, AminiBaker12}.

\subsection{Divisors on graphs and Riemann-Roch}

Let $\Gamma$ be a metric graph.  A \emph{divisor} on $\Gamma$ is a finite formal sum
\[
D = a_1 v_1 + \cdots + a_s v_s,
\]
where the $v_i$ are points in $\Gamma$ and the coefficients $a_i$ are integers.  The \emph{degree} of a divisor is the sum of its coefficients
\[
\deg (D) = a_1 + \cdots + a_s,
\]
and a divisor is \emph{effective} if all of its coefficients are nonnegative.  We say that an effective divisor \emph{contains} a point $v_i$ if its coefficient $a_i$ is strictly positive. We will frequently consider questions about whether a given effective divisor $D$ contains at least one point in a connected subset $\Gamma' \subset \Gamma$.  See, for instance, Section~\ref{sec:Obstructions}.

Let $\PL(\Gamma)$ be the additive group of continuous piecewise linear functions $\psi$ with integer slopes on $\Gamma$. (Throughout, all of the piecewise linear functions that we consider have integer slopes.)  The \emph{order} of such a piecewise linear function $\psi$ at a point $v$ is the sum of its incoming slopes along edges containing $v$, and is denoted $\ord_v(\psi)$.  Note that $\ord_v(\psi)$ is zero for all but finitely many points $v$ in $\Gamma$, so
\[
\ddiv ( \psi ) = \sum_{v \in \Gamma} \ord_v ( \psi ) \, v,
\]
is a divisor.  A divisor is \emph{principal} if it is equal to $\ddiv(\psi)$ for some piecewise linear function $\psi$, and two divisors $D$ and $D'$ are \emph{equivalent} if $D - D'$ is principal.  Note that every principal divisor has degree zero, so the group $\Pic(\Gamma)$ of equivalence classes of divisors is graded by degree.

Let $D$ be a divisor on $\Gamma$.  The \emph{complete linear series} $|D|$ is the set of effective divisors on $\Gamma$ that are equivalent to $D$, and
\[
R(D) = \{ \psi \in \PL(\Gamma) \ | \ D + \ddiv(\psi) \mbox{ is effective}\}.
\]
These objects are closely analogous to the complete linear series of a divisor on an algebraic curve, and the vector space of rational functions with poles bounded by that divisor.
There is a natural surjective map from $R(D)$ to $| D |$ taking a piecewise linear function $\psi$ to $\ddiv ( \psi ) + D$, and two functions $\psi$ and $\psi'$ have the same image in $\vert D \vert$ if and only if $\psi - \psi'$ is constant.  The vector space structure on rational functions with bounded poles is analogous to the \emph{tropical module} structure on $R(D)$.  Addition in this tropical module is given by the pointwise minimum; if $\psi_0, \ldots, \psi_r$ are in $R(D)$ and $b_0, \ldots, b_r$ are real numbers, then the function $\theta$ given by
\[
\theta(v) = \min_j \big \{ \psi_j(v) + b_j \big \},
\]
is also in $R(D)$ \cite{HMY12}.

The \emph{rank} $r(D)$ is the largest integer $r$ such that $D-E$ is equivalent to an effective divisor for every effective divisor $E$ of degree $r$.  In other words, a divisor $D$ has rank at least $r$ if and only if its linear series contains divisors that dominate any effective divisor of degree $r$.  This invariant satisfies the following Riemann-Roch theorem with respect to the \emph{canonical divisor} $K_\Gamma = \sum_{v \in \Gamma} (\deg(v) - 2) \, v$.

\begin{TropicalRR}
\cite{BakerNorine07, GathmannKerber08, MikhalkinZharkov08}
Let $D$ be a divisor on a metric graph $\Gamma$ with first Betti number $g$.  Then
$$ r(D) - r(K_{\Gamma}-D) = \deg(D)-g+1 .$$
\end{TropicalRR}

\begin{remark}
Although it is closely analogous to the classical Riemann-Roch for curves, the Tropical Riemann-Roch Theorem has no known proof via algebraic geometry.  Indeed, neither of these results is known to imply the other.
\end{remark}

\subsection{Specialization of divisors from curves to graphs}

Throughout, we work over a fixed algebraically closed field $K$ that is complete with respect to a nontrivial valuation
\[
\val: K^* \rightarrow \RR.
\]
Let $R \subset K$ be the valuation ring, and let $\kappa$ be the residue field.

Let $X$ be an algebraic curve over $K$.  The underlying set of the Berkovich analytic space $X^\an$ consists of the closed points $X(K)$ together with the set of valuations on the function field $K(X)$ that extend the given valuation on $K$.  We write
\[
\val_y : K(X) \rightarrow \RR \cup \{ + \infty \}
\]
for the valuation corresponding to a point $y$ in $X^\an \smallsetminus X(K)$.

\begin{remark}
We treat the points in $X(K)$ differently, because they do not correspond to valuations on the function field $K(X)$.  Nevertheless, one can still study the closed points in terms of generalized valuations on rings, as follows.  If $U \subset X$ is any affine open neighborhood of a closed point $x \in X(K)$, then the map
\[
\val_x : \cO_X(U) \rightarrow \RR \cup \{ + \infty \}
\]
is a ring valuation.  Note that $\val_x$, unlike a valuation on a field, may take a nonzero element to $+ \infty$.
\end{remark}

 The topology on $X^\an$ is the weakest containing $U^\an$ for every Zariski open $U$ in $X$ and such that, for any $f \in \cO_X(U)$, the function taking $x \in U^\an$ to $\val_x(f)$ is continuous.

The points in $X(K)$ are called type-1 points of $X^\an$, and the remaining points in $X^\an \smallsetminus X(K)$ are classified into three more types according to the algebraic properties of the corresponding valuation on $K(X)$.  For our purposes, the most relevant points are type-2 points, the points $y$ such that the residue field of $K(X)$ with respect to $\val_y$ has transcendence degree 1 over $\kappa$.  We write $X_y$ for the smooth projective curve over the residue field of $K$ with this function field.

\begin{remark}
By passing to a spherically complete extension field whose valuation surjects onto $\RR$, one could assume that all points in $X^\an \smallsetminus X(K)$ are of type-2.
\end{remark}

Suppose $X$ is smooth and projective.  Then $X$ has a \emph{semistable vertex set}, a finite set of type-2 points whose complement is a disjoint union of a finite number of open annuli and an infinite number of open balls.  Each semistable vertex set $V \subset X^\an$ corresponds to a semistable model $\cX_V$ of $X$.  The normalized irreducible components of the special fiber $\ocX_V$ are naturally identified with the curves $X_y$, for $y \in V$, and the preimages of the nodes in $\ocX_V$ under specialization are the annuli in $X^\an \smallsetminus V$.  The annulus corresponding to a node where $X_y$ meets $X_{y'}$ contains a unique embedded open segement with endpoints $y$ and $y'$, whose length is the logarithmic modulus of the annulus. The union of these open segments together with $V$ is a closed connected metric graph embedded in $X^\an \smallsetminus X(K)$ with a natural metric.  We write $\Gamma_V$ for this metric graph, and call it the \emph{skeleton} of the semistable model $\cX_V$.  If $X$ has genus at least 2, which we may assume since the Gieseker-Petri Theorem is trivial for curves of genus 0 and 1, there is a unique minimal semistable vertex set in $X^\an$.  We write $\Gamma$ for the skeleton of this minimal semistable vertex set, and call it simply the \emph{skeleton} of $X^\an$.

Each connected component of $X^\an \smallsetminus \Gamma$ has a unique boundary point in $\Gamma$, and there is a canonical retraction to the skeleton
\[
X^\an \rightarrow \Gamma
\]
taking a connected component of $X^\an \smallsetminus \Gamma$ to its boundary point.  Restricting to $X(K)$ and extending linearly gives the tropicalization map on divisors
\[
\Trop: \Div(X) \rightarrow \Div(\Gamma).
\]
This map respects rational equivalence of divisors, as follows.

Let $f \in K(X)$ be a rational function. We write $\trop(f)$ for the real valued function on the skeleton $\Gamma$ given by $y \mapsto \val_y(f)$.  The function $\trop(f)$ is piecewise linear with integer slopes.  Furthermore, if $y$ is a type-2 point and $\trop(f)(y) = 0$, then the residue $\overline f_y$ is a nonzero rational function on $X_y$ whose slope along an edge incident to $y$ is the order of vanishing of $\overline f_y$ at the corresponding node.  This is the nonarchimedean Poincar\'{e}-Lelong Formula, due to Thuiller; see \cite{ThuillierThesis} and \cite[\S 5]{BPR11}.  One immediate consequence of this formula is that the tropical specialization map for rational functions
\[
\trop: K(X)^* \rightarrow \PL(\Gamma)
\]
is compatible with passing to principal divisors.  More precisely, for any nonzero rational function $f \in K(X)$, we have
\[
\Trop(\ddiv(f)) = \ddiv(\trop(f)).
\]
Therefore, the tropicalization map on divisors respects equivalences and descends to a natural map on Picard groups
\[
\Trop: \Pic(X) \rightarrow \Pic(\Gamma).
\]
Furthermore, since tropicalizations of effective divisors are effective, if $D_X$ is a divisor on $X$ and $f$ is a rational function in $\cL(D_X)$, then $\trop(f)$ is in $R(\Trop(D_X))$.  This leads to the following version of Baker's Specialization Lemma.

\begin{lemma}
\label{BakerSpecialization}
Let $D_X$ be a divisor on $X$.  Then $r( \Trop (D_X)) \geq r(D_X)$.
\end{lemma}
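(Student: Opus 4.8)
The plan is to verify directly the defining property of the rank of $\Trop(D_X)$ on $\Gamma$. Write $r := r(D_X)$; if $r < 0$ there is nothing to prove, so assume $r \geq 0$. It then suffices to show that for every effective divisor $E$ of degree $r$ on $\Gamma$, the divisor $\Trop(D_X) - E$ is equivalent to an effective divisor. The idea is to \emph{lift} $E$ to the curve: first produce an effective divisor $E_X$ of degree $r$ on $X$ with $\Trop(E_X) = E$. Granting the lift, classical Riemann--Roch theory gives $r(D_X - E_X) \geq r(D_X) - \deg(E_X) \geq 0$, so $D_X - E_X$ is linearly equivalent to an effective divisor on $X$; applying $\Trop$, which by the nonarchimedean Poincar\'e--Lelong formula respects linear equivalence and carries effective divisors to effective divisors, shows that $\Trop(D_X) - E = \Trop(D_X - E_X)$ is equivalent to an effective divisor on $\Gamma$, as required.

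The only step requiring genuine work is the lift, which amounts to surjectivity of the retraction $X^{\an} \to \Gamma$ on type-1 points: once every point of $\Gamma$ equals $\Trop(x)$ for some $x \in X(K)$, one simply writes $E = \sum_i a_i v_i$, picks $x_i \in X(K)$ with $\Trop(x_i) = v_i$, and sets $E_X := \sum_i a_i x_i$. This surjectivity can fail when the value group of $K$ is a proper subgroup of $\RR$, since a point of $\Gamma$ lying at a parameter outside the value group is then itself of type 2 or 3 and no point of $X(K)$ retracts to it. I would remove this obstacle exactly as suggested by the remark above: pass to a complete, algebraically closed extension field $K'/K$ whose value group is all of $\RR$. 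Such an extension is harmless here, since it changes neither the minimal skeleton $\Gamma$, nor the divisor $\Trop(D_X) \in \Div(\Gamma)$ and its tropical rank, nor $r(D_X)$ (cohomology, hence $h^0$, commutes with field extension). Over $K'$ every point $v \in \Gamma$ is of type 2, so it lies in some semistable vertex set $V$; the corresponding component $X_v$ of $\ocX_V$ is a curve over an infinite field and so has infinitely many closed points away from its finitely many nodes, and each such closed point lifts to a point of $X(K')$ retracting to $v$. Hence $\Trop \colon X(K') \to \Gamma$ is surjective; compare \cite[\S 5]{BPR11}.

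Putting the two ingredients together finishes the argument: for an arbitrary effective $E$ of degree $r$ on $\Gamma$, lift it to $E_X$ on $X_{K'}$ as above and run the computation of the first paragraph to conclude that $\Trop(D_X) - E$ is equivalent to an effective divisor; since $E$ was arbitrary, $r(\Trop(D_X)) \geq r = r(D_X)$. The expected main obstacle is thus purely the lifting/surjectivity input; everything else is formal given the material already set up. (Alternatively, one could avoid the field extension by lifting only a small perturbation of $E$ supported on the dense subset $\Trop(X(K)) \subseteq \Gamma$ and invoking a semicontinuity argument for linear equivalence classes of effective divisors on $\Gamma$, but the base-change route is cleaner and fits the analytic framework used here.)
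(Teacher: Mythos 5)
Your proof is correct. The paper does not actually prove this lemma---it states it as a version of Baker's Specialization Lemma and simply cites \cite{Baker08}---but the argument you have reconstructed is precisely the standard one from the literature: lift an arbitrary effective divisor $E$ of degree $r$ from $\Gamma$ to $X$ after base change to a spherically complete extension with value group $\RR$ (which makes the retraction surjective on type-1 points without changing $\Gamma$, $\Trop(D_X)$, or $r(D_X)$), apply the inequality $r(D_X - E_X) \geq r(D_X) - \deg E_X$, and tropicalize an effective representative of $|D_X - E_X|$. One minor labeling slip: that inequality is not Riemann--Roch but the elementary dimension count that imposing vanishing along $E_X$ cuts $\cL(D_X)$ by at most $\deg E_X$ linear conditions; this does not affect the argument.
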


\noindent Here, the rank $r(D_X)$ is the dimension of the complete linear series of $D_X$ on $X$.

\begin{remark}
The Specialization Lemma and Riemann-Roch Theorem together imply that $\Trop(K_X) = K_\Gamma$, and hence tropicalization respects adjunction.  In other words, $\Trop(K_X - D_X) = K_\Gamma - \Trop(D_X)$.
\end{remark}

\begin{remark}
Note that $\trop (\cL(D_X))$ is often much smaller than $R(\Trop (D_X))$.  It is difficult in general to determine which piecewise linear functions in $R(\Trop(D_X))$ are tropicalizations of rational functions in $\cL(D_X)$.
\end{remark}

\section{Tropical Multiplication Maps}
\label{Section:Multiplication}

We now introduce a basic tropical lemma for studying linear dependence of rational functions and ranks of multiplication maps on linear series.

\subsection{Tropical independence}  \label{Sec:Independence}

Let $f_0, \ldots, f_r$ be rational functions on $X$.  Suppose $\{f_0, \ldots, f_r\}$ is linearly dependent, so there are constants $c_0, \ldots, c_r$ in $K$, not all zero, such that
\[
c_0 f_0 + \cdots + c_r f_r = 0.
\]
Then, for any point $v \in X^\an$, the minimum of the valuations
\[
\{ \val_v(c_0f_0), \ldots, \val_v(c_r f_r) \}
\]
must occur at least twice.  In particular, if $f_0, \ldots, f_r$ are linearly dependent in $K(X)$ then there are real numbers $b_0, \ldots, b_r$ such that the minimum of the piecewise linear functions $\{\trop(f_0) + b_0, \ldots, \trop(f_r) + b_r \}$ occurs at least twice at every point of the skeleton $\Gamma$.  Here, take $b_j = \val(c_j)$ if $c_j$ is nonzero, and otherwise make $b_j$ sufficiently large such that $\psi_j + b_j$ is never minimal.

\begin{definition}
A set of piecewise linear functions $\{ \psi_0, \ldots, \psi_r \}$ is \emph{tropically dependent} if there are real numbers $b_0, \ldots, b_r$ such that the minimum
\[
\min \{\psi_0(v) + b_0, \ldots, \psi_r(v) + b_r \}
\]
occurs at least twice at every point $v$ in $\Gamma$.
\end{definition}

\noindent If there are no such real numbers $b_0, \ldots, b_r$ then we say $\{ \psi_0, \ldots, \psi_r \}$ is \emph{tropically independent}.

\begin{lemma}
\label{Lem:MultiplicationMaps}
Let $D_X$ and $E_X$ be divisors on $X$, with $\{f_0, \ldots, f_r\}$ and $\{g_0, \ldots, g_s\}$ bases for $\cL(D_X)$ and $\cL(E_X)$, respectively.  If $\{\trop(f_i) + \trop(g_j)\}_{ij}$ is tropically independent then the multiplication map
\[
\mu: \cL(D_X) \otimes \cL(E_X) \rightarrow \cL(D_X+E_X)
\]
is injective.
\end{lemma}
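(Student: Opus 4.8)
The plan is to prove the contrapositive: assuming $\mu$ has a nonzero kernel, we produce real numbers exhibiting tropical dependence of the family $\{\trop(f_i)+\trop(g_j)\}_{ij}$. A nonzero element of $\ker\mu$ is a rational function on $X$ of the form $h = \sum_{i,j} c_{ij}\, f_i g_j$ with not all $c_{ij}$ zero, and with $h \equiv 0$ in $K(X)$. The key point is that each product $f_i g_j$ lies in $\cL(D_X + E_X)$, and its tropicalization satisfies $\trop(f_i g_j) = \trop(f_i) + \trop(g_j)$, since $\val_v$ is a valuation and hence additive on products. So the relation $\sum_{i,j} c_{ij} f_i g_j = 0$ is exactly a linear dependence among the rational functions $\{f_i g_j\}_{ij}$ in $K(X)$.

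Next I would invoke the observation made just before the definition of tropical dependence, applied to the finite list of rational functions $\{f_i g_j\}$: since they are linearly dependent in $K(X)$, there exist real numbers $b_{ij}$ (namely $b_{ij} = \val(c_{ij})$ when $c_{ij}\neq 0$, and $b_{ij}$ chosen large enough to never be minimal otherwise) such that at every point $v$ of the skeleton $\Gamma$, the minimum of $\{\val_v(c_{ij} f_i g_j)\}_{ij} = \{\trop(f_i)(v) + \trop(g_j)(v) + b_{ij}\}_{ij}$ is attained at least twice. This is precisely the statement that $\{\trop(f_i) + \trop(g_j)\}_{ij}$ is tropically dependent, contradicting the hypothesis. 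Hence no nonzero element of $\ker\mu$ exists, i.e., $\mu$ is injective.

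The only subtlety worth checking is that the "minimum attained twice" phenomenon genuinely holds at \emph{every} point of $\Gamma$ and not merely at type-$2$ points. This follows because the retraction $X^\an \to \Gamma$ and the functions $\trop(f)$ are defined on all of $\Gamma$, and the elementary valuation-theoretic argument (if $\sum c_k f_k = 0$ with the $c_k$ not all zero, then for each $v$ the smallest of the $\val_v(c_k f_k)$ cannot be achieved uniquely, else that term would be strictly smaller than all others and the sum could not vanish) is purely formal and applies verbatim with $v$ any point of $X^\an$, in particular any point of $\Gamma$. So there is no real obstacle here: the lemma is essentially a repackaging of the tropical-dependence observation together with multiplicativity of tropicalization, and the main (very mild) point to be careful about is simply that the dependence relation among the $f_i g_j$ transfers cleanly through $\trop$ because $\trop$ turns products into sums.
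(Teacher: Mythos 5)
Your proposal is correct and is essentially the paper's argument, merely run in the contrapositive direction: the paper observes directly that the $f_ig_j$ are linearly independent because their tropicalizations $\trop(f_i)+\trop(g_j)$ are tropically independent, whereas you unpack this by assuming a nonzero kernel and deriving the tropical dependence. Both rest on the same two facts — that the elementary tensors $f_i\otimes g_j$ map to $f_ig_j$ and form a basis, and that tropicalization turns products into sums and linear dependence into tropical dependence — so there is no substantive difference.
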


\begin{proof}
The elementary tensors $f_i \otimes g_j$ form a  basis for $\cL(D_X) \otimes \cL(E_X)$.  The image of $f_i \otimes g_j$ under $\mu$ is the rational function $f_i g_j$, and these are linearly independent, since their tropicalizations are tropically independent.
\end{proof}

\begin{remark}
The main difficulty in applying this lemma is that one must prove the existence of rational functions in the algebraic linear series whose tropicalizations have the appropriate independence property.  Finding such piecewise linear functions in the tropical linear series is not enough.
\end{remark}

\subsection{Shapes of equivalent divisors} \label{sec:Obstructions}

Here we prove a technical proposition about how the tropical module structure on $R(D)$ is reflected in the shapes of divisors in $|D|$.  The proposition will be particularly useful when combined with our notion of tropical dependence of piecewise linear functions.

\begin{lemma}
\label{Lem:MinChips}
Let $D$ be a divisor on a metric graph $\Gamma$, with $\psi_0, \ldots, \psi_r$ piecewise linear functions in $R(D)$, and let
\[
\theta = \min \{ \psi_0, \ldots, \psi_r \}.
\]
Let $\Gamma_j \subset \Gamma$ be the closed set where $\theta = \psi_j$.  Then $\ddiv( \theta ) +D$ contains a point $v \in \Gamma_j$ if and only if $v$ is in either
\begin{enumerate}
\item  the divisor $\ddiv( \psi_j ) + D$, or
\item  the boundary of $\Gamma_j$.
\end{enumerate}
\end{lemma}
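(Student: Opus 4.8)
The plan is to analyze the local structure of $\theta = \min\{\psi_0,\dots,\psi_r\}$ at a point $v \in \Gamma_j$, distinguishing the interior of $\Gamma_j$ from its boundary. Recall that $\ord_v(\theta) = \sum e (\text{incoming slope of }\theta\text{ along }e)$, where the sum is over edges $e$ emanating from $v$, and similarly for $\psi_j$. The key observation is that along any tangent direction $e$ at $v$, if $v$ lies in the interior of $\Gamma_j$, then $\theta$ agrees with $\psi_j$ on an initial segment of $e$, so the incoming slope of $\theta$ along $e$ equals that of $\psi_j$. Hence $\ord_v(\theta) = \ord_v(\psi_j)$ for all $v$ in the interior of $\Gamma_j$, and therefore $\ddiv(\theta) + D$ and $\ddiv(\psi_j) + D$ have the same coefficient at such $v$. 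This immediately gives the ``if'' direction of case (1) and the ``only if'' direction away from the boundary.

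The remaining point is to handle $v \in \partial\Gamma_j$, where I claim $v$ is always contained in $\ddiv(\theta) + D$; this is what makes case (2) an unconditional sufficient condition. First, since $\theta \in R(D)$ by the tropical module property cited from \cite{HMY12}, the divisor $\ddiv(\theta) + D$ is effective, so every coefficient is $\geq 0$; it suffices to show the coefficient at $v$ is strictly positive, i.e.\ $\ord_v(\theta) + D(v) > 0$, or even just $\ord_v(\theta) > -D(v) \geq -\infty$ combined with effectivity — more precisely I must rule out the coefficient being exactly zero. For this I compare incoming slopes direction by direction. Along a tangent direction $e$ pointing into $\Gamma_j$, the incoming slope of $\theta$ along $e$ equals that of $\psi_j$, as above. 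Along a direction $e$ pointing out of $\Gamma_j$ (i.e.\ into the region where $\theta = \psi_k$ for some $k$ with $\psi_k(v) = \psi_j(v)$ but $\psi_k < \psi_j$ just past $v$), the incoming slope of $\theta$ along $e$ is the incoming slope of $\psi_k$ along $e$, which is \emph{strictly less than} the incoming slope of $\psi_j$ along $e$ — because $\psi_k \le \psi_j$ near $v$ along $e$ with equality at $v$ forces the slope of $\psi_k$ away from $v$ to be no larger, hence the incoming slope (negated) no larger, and being on the boundary there is at least one such direction where it is strictly smaller. Summing over all directions, $\ord_v(\theta) < \ord_v(\psi_j)$ when $v \in \partial\Gamma_j$. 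Actually the cleaner route: $\theta \le \psi_j$ everywhere with equality on $\Gamma_j$, so $\psi_j - \theta$ attains a local minimum (value $0$) at every $v \in \Gamma_j$; at a boundary point this function is not locally constant, so its order — which is $\ord_v(\psi_j) - \ord_v(\theta)$ — is strictly positive by the standard fact that a piecewise linear function with a non-strict local minimum that is not locally constant has positive order there. Thus $\ord_v(\theta) + D(v) \ge \ord_v(\theta) + D(v)$; combining $\ord_v(\psi_j) + D(v) \ge 0$ (effectivity of $\ddiv(\psi_j)+D$, since $\psi_j \in R(D)$) with $\ord_v(\theta) < \ord_v(\psi_j)$ does \emph{not} yet give positivity, so instead I use directly: effectivity of $\ddiv(\theta)+D$ gives $\ord_v(\theta)+D(v)\ge 0$, and if it were $0$ then $\psi_j-\theta$ would have order $-D(v) - \ord_v(\theta) + \ord_v(\psi_j)$...

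Let me restate the boundary argument cleanly, since that is the heart of the matter. The honest claim is: for $v \in \partial \Gamma_j$, the divisor $\ddiv(\theta)+D$ contains $v$. Since $\theta \in R(D)$, $\ddiv(\theta)+D$ is effective. Suppose for contradiction $v$ is not contained in it, i.e.\ $\ord_v(\theta) + D(v) = 0$. On the other hand $\psi_j \in R(D)$ gives $\ord_v(\psi_j) + D(v) \ge 0$, i.e.\ $\ord_v(\psi_j) \ge -D(v) = \ord_v(\theta)$. But I show $\ord_v(\theta) \le \ord_v(\psi_j)$ always, with a separate handle when $v \in \partial\Gamma_j$: pick a direction $e$ at $v$ along which $\theta < \psi_j$ in the interior of $e$ (exists since $v \in \partial\Gamma_j$), and pick a point $u$ on $e$ close to $v$. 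Then $\theta(u) < \psi_j(u)$, and the incoming slope of $\theta$ at $v$ along $e$ equals $\min_k \{\text{incoming slope of } \psi_k \text{ along } e\}$ over those $k$ with $\psi_k(v)=\theta(v)$, which is $\le$ the incoming slope of $\psi_j$ along $e$; this inequality is enough together with the other directions being $\le$ as well to conclude $\ord_v(\theta) \le \ord_v(\psi_j)$. This contradicts nothing directly, so the correct statement to prove is genuinely that $\ord_v(\theta) + D(v) > 0$, which requires the \emph{strict} inequality in at least one direction and the fact that the coefficient of $\psi_j$-divisor could be positive or zero. The robust argument: $\theta$ and $\psi_j$ are both in $R(D)$, and near $v$, $\theta = \min\{\psi_k : \psi_k(v)=\theta(v)\}$; the standard fact that $\ord_v(\min\{h_1,\dots,h_m\}) \ge \min_k \ord_v(h_k)$ is false in the wrong direction, so instead I invoke: $\ord_v(\min_k h_k) + D(v) \ge 0$ with equality iff \emph{every} active $h_k$ also has $\ord_v(h_k) + D(v) = 0$ \emph{and} all active slopes agree direction-by-direction — but at a boundary point the active functions do not all agree direction-by-direction, forcing strict positivity. \textbf{I expect this boundary positivity step to be the main obstacle}, as it requires carefully bookkeeping incoming slopes of the several functions active at $v$ and comparing to $D(v)$; the interior case is a routine ``$\theta$ locally equals $\psi_j$'' observation.
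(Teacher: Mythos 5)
Your overall two-case structure (interior of $\Gamma_j$ versus boundary) is the same as the paper's, and the interior case is handled correctly. But the boundary case, which you yourself flag as the heart of the matter, is never closed, and the reason you cannot close it is a persistent sign error in comparing incoming slopes of $\theta$ and $\psi_j$.

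Recall the convention in the paper: the incoming slope of $\psi$ at $v$ along an edge $e$ is $\lim_{t\to 0^+}\bigl(\psi(v)-\psi(v_t)\bigr)/t$, where $v_t$ is the point at distance $t$ from $v$ on $e$, and $\ord_v(\psi)$ is the sum of these. Now fix $v\in\Gamma_j$ and any edge $e$ at $v$. Since $\theta\leq\psi_j$ everywhere and $\theta(v)=\psi_j(v)$, we have $\theta(v)-\theta(v_t)\geq\psi_j(v)-\psi_j(v_t)$ for small $t$, so the incoming slope of $\theta$ along $e$ is at least that of $\psi_j$; if $\theta<\psi_j$ on an initial segment of $e$ the inequality is strict. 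At a boundary point of $\Gamma_j$ such an edge exists, so summing over all edges gives $\ord_v(\theta)>\ord_v(\psi_j)$. Then effectivity of $\ddiv(\psi_j)+D$ gives $\ord_v(\psi_j)\geq -D(v)$, hence $\ord_v(\theta)+D(v)>0$, i.e.\ $v$ is in $\ddiv(\theta)+D$. Your writeup repeatedly asserts the opposite inequality ($\ord_v(\theta)\leq\ord_v(\psi_j)$, or that $\ord_v(\psi_j)-\ord_v(\theta)>0$), which is what forces you into the inconclusive spiral at the end: with the reversed inequality, effectivity of $\ddiv(\psi_j)+D$ gives no information about $\ord_v(\theta)+D(v)$. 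The same confusion appears in the claim that the branch of $\theta$ leaving $\Gamma_j$ has \emph{smaller} incoming slope than $\psi_j$, and in the claim that $\theta$'s incoming slope is the \emph{minimum} of the active $\psi_k$'s incoming slopes (it is the maximum: the function that is smallest just past $v$ is the one that drops fastest from $v$, hence has the largest incoming slope). Once the inequality $\ord_v(\theta)>\ord_v(\psi_j)$ at boundary points is stated with the correct sign, the argument is two lines and coincides with the paper's proof.
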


\begin{proof}
If $\psi_j$ agrees with $\theta$ on some open neighborhood of $v$, then $\ord_v ( \theta ) = \ord_v ( \psi_j )$, and hence $\ddiv ( \theta ) + D$ contains $v$ if and only if $\ddiv ( \psi_j ) + D$ does.  On the other hand, if $v$ is in the boundary of $\Gamma_j$ then there is an edge containing $v$ such that the incoming slope of $\theta$ along this edge is strictly greater than that of $\psi_j$, and the incoming slope of $\theta$ along any other edge containing $v$ must be at least as large as that of $\psi_j$.  By summing over all edges containing $v$ we find that $\ord_v ( \theta )$ is strictly greater than $\ord_v ( \psi_j )$.  Since $\ddiv \psi_j + D$ is effective, by hypothesis, it follows that the coefficient of $v$ in $\ddiv ( \theta ) + D$ is strictly positive, as required.
\end{proof}

\begin{proposition}
\label{Prop:Obstruction}
Let $D$ be a divisor on a metric graph $\Gamma$, with $\psi_0, \ldots, \psi_r$ in $R(D)$ and
\[
\theta = \min\{ \psi_0, \ldots, \psi_r \}.
\]
Let $\Gamma' \subset \Gamma$ be a connected subset, and suppose that $\ddiv( \psi_j ) + D$ contains a point in $\Gamma '$ for all $j$.  Then $\ddiv( \theta )+D$ also contains a point in $\Gamma'$.
\end{proposition}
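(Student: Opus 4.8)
The plan is to argue by contradiction, feeding Lemma~\ref{Lem:MinChips} into a connectedness argument for the closed cover $\{\Gamma_j\}_{j}$ of $\Gamma$, where $\Gamma_j = \{v \in \Gamma : \theta(v) = \psi_j(v)\}$. First I would record two preliminary observations: $\theta$ lies in $R(D)$, by the tropical module structure on $R(D)$ recalled in Section~\ref{Section:Background}, so $\ddiv(\theta)+D$ is effective; and the $\Gamma_j$ are closed and cover $\Gamma$, since at every point the minimum defining $\theta$ is attained by at least one $\psi_j$. In particular they cover $\Gamma'$.

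Now suppose, for contradiction, that $\ddiv(\theta)+D$ contains no point of $\Gamma'$. Applying Lemma~\ref{Lem:MinChips} at each point of $\Gamma_j \cap \Gamma'$, I get that for every index $j$ the set $\Gamma_j \cap \Gamma'$ is disjoint both from the support of $\ddiv(\psi_j)+D$ and from the topological boundary $\partial\Gamma_j$ of $\Gamma_j$ in $\Gamma$. The next step is to turn the boundary condition into a statement about the subspace topology on $\Gamma'$: since $\Gamma_j$ is closed in $\Gamma$, one has $\Gamma_j \setminus \partial\Gamma_j = \mathrm{int}(\Gamma_j)$, so disjointness from $\partial\Gamma_j$ forces $\Gamma_j \cap \Gamma' = \mathrm{int}(\Gamma_j) \cap \Gamma'$, which is open in $\Gamma'$; it is of course also closed in $\Gamma'$. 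Hence $\Gamma_j \cap \Gamma'$ is clopen in $\Gamma'$, and since $\Gamma'$ is connected it is either empty or all of $\Gamma'$. Because the $\Gamma_j$ cover $\Gamma'$, some index $j_0$ satisfies $\Gamma' \subseteq \Gamma_{j_0}$.

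Finally, the hypothesis of the proposition supplies a point $v \in \Gamma'$ lying in the support of $\ddiv(\psi_{j_0})+D$; but $v \in \Gamma' \subseteq \Gamma_{j_0}$, which contradicts the disjointness of $\Gamma_{j_0} \cap \Gamma'$ from that support established in the previous step. Therefore $\ddiv(\theta)+D$ must contain a point of $\Gamma'$, as claimed. I expect the only delicate point to be the topological bookkeeping in the middle step—keeping straight which interiors, boundaries, and closures are taken in $\Gamma$ versus in the subspace $\Gamma'$—but this is routine, and everything else is immediate once Lemma~\ref{Lem:MinChips} is in hand.
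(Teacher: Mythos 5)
Your proof is correct and is essentially the contrapositive of the paper's own argument: both rest on Lemma~\ref{Lem:MinChips} together with the connectedness of $\Gamma'$, the paper picking a single $j$ with $\theta = \psi_j$ somewhere in $\Gamma'$ and splitting into cases, while you run the same clopen-subset argument over all $j$ at once to force a contradiction. The topological bookkeeping you flag (that $\Gamma_j \setminus \partial\Gamma_j = \mathrm{int}(\Gamma_j)$ for the closed set $\Gamma_j$, hence $\Gamma_j \cap \Gamma'$ is clopen in $\Gamma'$) is handled correctly.
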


\begin{proof}
Pick $j$ such that $\theta$ is equal to $\psi_j$ at some point in $\Gamma'$, and let
$$ \Gamma'_j = \{ v \in \Gamma'  \ | \  \theta (v) = \psi_j (v) \} .$$
If $\Gamma'_j$ is properly contained in $\Gamma'$, then its boundary is nonempty, since $\Gamma'$ is connected, and each of the boundary points is contained in $\ddiv (\theta) + D$, by Lemma~\ref{Lem:MinChips}.

Otherwise, if $\theta$ agrees with $\psi_j$ on all of $\Gamma'$, then $\ddiv(\theta) + D$ contains the points of $\ddiv(\psi_j) + D$ in $\Gamma'$, and the proposition follows.
\end{proof}

\section{The chain of loops with bridges}
\label{Section:TheGraph}

We now restrict attention to the specific graph $\Gamma$ shown in Figure~\ref{Fig:ChainOfLoops}, consisting of a chain of $g$ loops separated by bridges.  Throughout, we assume that the loops of $\Gamma$ have generic edge lengths in the same sense as in \cite{tropicalBN}, meaning that $\ell_i / m_i$ is never equal to the ratio of two positive integers whose sum is less than or equal to $2g-2$.

\subsection{Reduced divisors}

Fix a point $v \in \Gamma$.  Recall that an effective divisor $D$ is $v$-reduced if the multiset of distances from $v$ to points in $D$ is lexicographically minimal among all effective divisors equivalent to $D$.  Every effective divisor is equivalent to a unique $v$-reduced divisor, and the rank of a $v$-reduced divisor is bounded above by the coefficient of $v$.  In particular, if $D$ is a $v$-reduced divisor that does not contain $v$, then $r(D)$ is zero.  See \cite[Proposition 2.1]{Luo11}.

It is relatively straightforward to classify $v$-reduced divisors on $\Gamma$. We will only need the special case of $w_g$-reduced divisors.  For each $i$, let $\gamma_i$ be the $i$th loop minus $w_i$, the union of the two half-open edges $[v_i, w_i)$, and let $\br_i$ be the half-open bridge $[w_{i}, v_{i+1})$.  Note that $\Gamma$ decomposes as a disjoint union
\[
\Gamma = \gamma_1 \sqcup \br_1 \sqcup \cdots \sqcup \gamma_g \sqcup \{ w_g \},
\]
as shown.


\begin{figure}[H]
\begin{tikzpicture}
\matrix[column sep=0.5cm] {
\begin{scope}[baseline]
\draw [ball color=black] (-1.7,-0.45) circle (0.55mm);
\draw (-1.95,-0.65) node {\footnotesize $v_1$};
\draw (-1.5,0) circle (0.5);
\draw [ball color=white] (-1,0) circle (0.55mm);
\draw (-1.5,1.0) node {\footnotesize $\gamma_1$};
\end{scope}
&
\begin{scope}[grow=right,baseline]
\draw [ball color=black] (-1,0) circle (0.55mm);
\draw (-1,0)--(0,0.5);
\draw (-0.8,0.4) node {\footnotesize $w_1$};
\draw [ball color=white] (0,0.5) circle (0.55mm);
\draw (-0.5,1.2) node {\footnotesize $\br_1$};
\end{scope}
&
\begin{scope}[grow=right,baseline]
\draw node at (.5,0.48) {$\cdots$};
\end{scope}
&
\begin{scope}[grow=right,baseline]
\draw (0.7,0.5) circle (0.7);
\draw [ball color=white] (1.4,0.5) circle (0.55mm);
\draw [ball color=black] (0,0.5) circle (0.55mm);
\draw (0.7,1.5) node {\footnotesize $\gamma_i$};
\end{scope}
&
\begin{scope}[grow=right,baseline]
\draw (1.4,0.5)--(2,0.3);
\draw [ball color=white] (2,0.3) circle (0.55mm);
\draw [ball color=black] (1.4,0.5) circle (0.55mm);
\draw (1.7,1.1) node {\footnotesize $\br_i$};
\end{scope}
&
\begin{scope}[grow=right,baseline]
\draw node at (2,0.3) {$\cdots$};
\end{scope}
&
\begin{scope}[grow=right,baseline]
\draw (2.6,0.3) circle (0.6);
\draw [ball color=black] (2,0.3) circle (0.55mm);
\draw [ball color=white] (3.2,0.3) circle (0.55mm);
\draw (2.6,1.2) node {\footnotesize $\gamma_g$};
\end{scope}
&
\begin{scope}[grow=right, baseline]
\draw [ball color = black] (5.16,0.3) circle (0.55mm);
\draw node at (5.16,0.05) {\footnotesize $w_g$};
\end{scope}
\\};
\end{tikzpicture}
\caption{A decomposition of $\Gamma$.}
\end{figure}
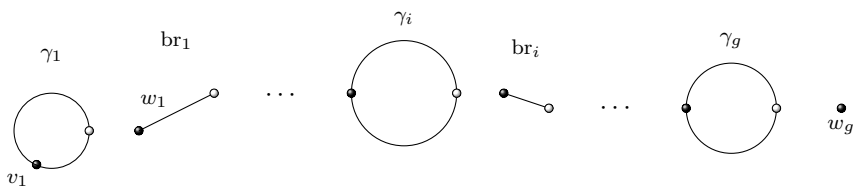

\begin{proposition}\label{Prop:ReducedDivisors}
An effective divisor $D$ is $w_g$-reduced if and only if it contains
\begin{enumerate}
\item no points in the bridges $\br_1, \ldots, \br_{g-1}$, and
\item at most one point in each cell $\gamma_1, \ldots, \gamma_g$.
\end{enumerate}
\end{proposition}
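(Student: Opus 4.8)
The plan is to characterize $w_g$-reduced divisors via Dhar's burning algorithm, equivalently via the criterion that an effective divisor $D$ is $v$-reduced if and only if every nonempty closed connected $A\subseteq\Gamma\smallsetminus\{v\}$ contains a point $p$ with $D(p)<\operatorname{outdeg}_A(p)$, where $\operatorname{outdeg}_A(p)$ counts the tangent directions at $p$ leaving $A$ (see \cite{BakerNorine07} and \cite[Prop.~2.1]{Luo11}). I would subdivide $\Gamma$ so that every point of $D$ is a vertex; a vertex $p$ then ``combusts'' and propagates the fire only once strictly more than $D(p)$ of the edges at $p$ have burnt, so a vertex with $D(p)\ge\deg(p)$ never combusts unless it is the source. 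I will read condition~(2) as the statement that $\deg(D|_{\gamma_i})\le 1$ for each $i$ --- that is, that $D$ has at most one chip, counted with multiplicity, on each $\gamma_i$.

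For necessity I would run the fire from $w_g$, using that it must burn all of $\Gamma$. Each bridge $e_i=[w_i,v_{i+1}]$ is a separating edge with $w_g$ on its right, so the fire can reach a point of $\br_i=[w_i,v_{i+1})$ only from the $v_{i+1}$ side; a chip there would stall the fire and loops $1,\dots,i$ would never burn, which proves~(1). For~(2) the point is that the only approach to loop~$i$ from $w_g$ runs through $w_i$ --- everything else is reached only after the fire has crossed loop~$i$ and combusted $v_i$ --- and by~(1) the chip-free bridge $e_i$ combusts $w_i$ as soon as the fire arrives (for $i=g$, $w_g$ is the source). The fire then runs down the two arcs of loop~$i$ from $w_i$, and the whole graph burns only if $v_i$ combusts; but until it does, $v_i$'s only burnt edges lie among its two loop-arcs. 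So either both arcs burn fully from $w_i$ --- forcing their interiors to be chip-free --- and $D(v_i)\le 1$, or only one arc burns and $D(v_i)=0$; either way $\deg(D|_{\gamma_i})\le 1$. The only remaining possibility is that the fire is blocked on one arc by a chip near $w_i$ and then, once $v_i$ has combusted, blocked on that same arc by a second chip approached from the $v_i$ side, leaving the open segment between the two chips unburnt --- a contradiction. Hence~(2).

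For sufficiency I would again light the fire at $w_g$ and check that it burns everything. It spreads into the two arcs of loop~$g$. If $\gamma_g$ carries no chip, or its chip sits at $v_g$, the fire reaches $v_g$ along both arcs and $v_g$ combusts since $D(v_g)\le 1<2$; if the chip lies in an arc interior, the fire still reaches $v_g$ along the other arc with $D(v_g)=0$, $v_g$ combusts, and the fire then returns along the blocked arc and combusts that chip from its far side. In every case all of loop~$g$ burns, the fire moves onto the chip-free bridge $e_{g-1}$, combusts $w_{g-1}$, and the identical argument applies to loop $g-1$, and so on down to loop~$1$ (whose vertex $v_1$ is bivalent but still combusts because $D(v_1)\le 1<2$). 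So the fire burns all of $\Gamma$ and $D$ is $w_g$-reduced. Alternatively one can argue sufficiency straight from the connected-set criterion by induction on $g$: a closed connected $A\not\ni w_g$ either meets the interior of a bridge or a vertex $w_i$, where~(1) produces a witness of outdegree~$1$, or else $A$ lies inside loop~$k$ with $w_k$ deleted together with the part of $\Gamma$ to its left, where~(2), a short case analysis on whether $A$ contains $v_k$, and the inductive hypothesis produce a witness.

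The hardest part will be the necessity of~(2): one must exclude every way of placing two or more chips on a single loop, and the bookkeeping is sensitive to the difference between chips in edge interiors (bivalent vertices after subdivision) and chips at the trivalent vertices $v_i$. A double chip at $v_g$ blocks the fire from ever leaving loop~$g$, since $v_g$ has only its two loop-edges available when it needs to combust; a double chip anywhere else on loop~$g$ fails for the different reason that it sits at a bivalent vertex, which can never combust. Pinning down the combustion rule at bivalent chip-vertices is exactly what forces ``$\deg(D|_{\gamma_i})\le 1$'' rather than the weaker ``$D$ has at most one point of support in $\gamma_i$''; the latter would admit $D=2v_g$, which is not $w_g$-reduced because firing the subgraph consisting of $v_g$ together with everything to its left pushes the two chips strictly closer to $w_g$.
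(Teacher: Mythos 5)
Your proof is correct and uses exactly the tool the paper points to (Dhar's burning algorithm, citing Example~2.6 of \cite{tropicalBN}); the paper omits the details, and your writeup supplies them. Your observation at the end is a genuine and worthwhile clarification: read literally, with the paper's definition that a divisor ``contains'' a point when its coefficient there is positive, condition~(2) only bounds the cardinality of the support of $D|_{\gamma_i}$, and then the proposition's ``if'' direction fails --- $D = 2v_i$ contains just one point of $\gamma_i$ and no bridge points, yet the fire from $w_g$ arrives at $v_i$ along two loop arcs, finds $D(v_i)=2$, and stalls, so $2v_i$ is not $w_g$-reduced. The intended (and correct) reading is $\deg(D|_{\gamma_i})\le 1$, which is what you prove. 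A few small cleanups would tighten the necessity argument for~(2): your ``either both arcs burn fully or only one arc burns'' dichotomy silently drops the case where both arcs are blocked (which also forces $\deg(D|_{\gamma_i})\ge 2$, so the conclusion is fine but should be said), and there is an indexing slip --- in the paper's notation the bridge immediately left of loop $g$ is $e_g=[w_{g-1},v_g]$, not $e_{g-1}$.
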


\begin{proof}
This is a straightforward application of Dhar's burning algorithm, as in \cite[Example~2.6]{tropicalBN}.
\end{proof}

\subsection{The shape of a canonical divisor} \label{sec:CanonicalShapes}

As mentioned in the introduction, our strategy is a proof by contradiction; we assume that a multiplication map has nonzero kernel and use Proposition~\ref{Prop:Obstruction} to construct a canonical divisor of \emph{impossible shape}.

The following basic lemma, which we state and prove but do not use, restricts the possibilities for the shape of a canonical divisor on an arbitrary graph.

\begin{lemma}
\label{Lem:CanonicalDivisorsOnAnyGraph}
Let $\Gamma'$ be a metric graph of genus $g$, let $e_1 , \ldots , e_g$ be disjoint open edges of $\Gamma'$ whose complement is a tree, and let $D$ be an effective divisor equivalent to $K_{\Gamma'}$.  Then at least one of the open edges $e_1, \ldots, e_g$ contains no point of $D$.
\end{lemma}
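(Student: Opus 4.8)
The plan is to argue by contradiction, using Dhar's burning algorithm together with the Tropical Riemann-Roch Theorem. Let $T = \Gamma' \smallsetminus (e_1 \cup \cdots \cup e_g)$ be the complement of the open edges; by hypothesis $T$ is a tree, so it is connected and contains every endpoint of every $e_i$. Suppose, for a contradiction, that $D$ contains a point $p_i$ of $e_i$ for each $i$. Since the $e_i$ are disjoint, the points $p_1, \ldots, p_g$ are distinct, so $D - (p_1 + \cdots + p_g)$ is again effective; being equivalent to $K_{\Gamma'} - (p_1 + \cdots + p_g)$, this shows that $r(K_{\Gamma'} - p_1 - \cdots - p_g) \geq 0$.

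The heart of the argument is to show that $r(p_1 + \cdots + p_g) = 0$. I would fix a point $v$ in the tree $T$; since each $p_i$ lies in the open edge $e_i$, none of the $p_i$ is in $T$, so $v$ is not in the support of $p_1 + \cdots + p_g$. I then claim that $p_1 + \cdots + p_g$ is $v$-reduced, which I would verify with Dhar's burning algorithm: starting a fire at $v$, the fire burns all of the connected set $T$, since $T$ contains no point of the divisor, and in particular reaches both endpoints of each $e_i$; it then burns into $e_i$ from both endpoints, reaching $p_i$ from both of the two directions at $p_i$, and since $p_i$ has coefficient $1$ the fire passes through $p_i$, so that all of $e_i$ burns. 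Thus the fire consumes all of $\Gamma'$, the divisor $p_1 + \cdots + p_g$ is $v$-reduced, and as it does not contain $v$ its rank is $0$ by \cite[Proposition~2.1]{Luo11}.

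Finally, $K_{\Gamma'} - p_1 - \cdots - p_g$ has degree $(2g-2) - g = g-2$, so the Tropical Riemann-Roch Theorem gives
\[
r(K_{\Gamma'} - p_1 - \cdots - p_g) - r(p_1 + \cdots + p_g) = (g-2) - g + 1 = -1,
\]
whence $r(K_{\Gamma'} - p_1 - \cdots - p_g) = -1$, contradicting the inequality obtained above. Therefore $D$ must contain no point of at least one of the edges $e_1, \ldots, e_g$. The only step that needs care is the burning-algorithm verification, and even there the point is simple: an interior point of an edge of $\Gamma'$ has exactly two directions, both of which are on fire as soon as the whole tree $T$ burns, so the single chip at each $p_i$ never obstructs the fire.
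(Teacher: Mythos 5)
Your proof is correct and takes essentially the same approach as the paper: set $D' = p_1 + \cdots + p_g$, show $r(D') = 0$ via Dhar's burning algorithm starting from a vertex of the tree $T$, and obtain a contradiction with Tropical Riemann--Roch. The only differences are cosmetic—you phrase the Riemann--Roch contradiction as $r(K_{\Gamma'} - D') = -1$ against $r(K_{\Gamma'} - D') \geq 0$, whereas the paper phrases it as $r(D') \geq 1$ against $r(D') = 0$, and you spell out the burning-algorithm verification that the paper merely cites.
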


\begin{proof}
Suppose that each open edge $e_1, \ldots, e_g$ contains a point of $D$, let $p_i$ be a point in $e_i$, and let $D' = p_1 + \cdots + p_g$.  Since $K_{\Gamma'} - D'$ is effective, by construction, the Tropical Riemann-Roch Theorem says that $r(D')$ is at least 1.  However, Dhar's burning algorithm \cite{Dhar90} shows that $D'$ is $v$-reduced for any point $v$ in the complement of $e_1 \cup \cdots \cup e_g$.  Since $D'$ does not contain $v$, it follows that $r(D')$ is zero.
\end{proof}

\begin{remark}
Lemma~\ref{Lem:CanonicalDivisorsOnAnyGraph} also follows from the rigidity of effective representatives for classes in the relative interiors of top dimensional cells in the natural subdivision of $\Pic_g(\Gamma)$ into parallelotopes studied by An, Baker, Kuperberg, and Shokrieh \cite[Lemma~3.5]{ABKS}.
\end{remark}

On the chain of loops with bridges, we can use the classification of $w_g$-reduced divisors to refine the preceding lemma as follows.

\begin{lemma}
\label{Lem:CanonicalDivisorsSkipLoops}
Let $D$ be an effective divisor equivalent to $K_\Gamma$.  Then $D$ contains no point in at least one of the cells $\gamma_1, \ldots, \gamma_g$.
\end{lemma}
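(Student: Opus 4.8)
The plan is to imitate the proof of Lemma~\ref{Lem:CanonicalDivisorsOnAnyGraph}, but to use the sharper classification of $w_g$-reduced divisors provided by Proposition~\ref{Prop:ReducedDivisors} in place of a bare application of Dhar's burning algorithm. Suppose, for contradiction, that $D$ is effective, equivalent to $K_\Gamma$, and contains a point in every cell $\gamma_1, \dots, \gamma_g$. Pick a point $p_i \in \gamma_i \cap D$ for each $i$ and set $D' = p_1 + \cdots + p_g$, a divisor of degree $g$. Since $D - D'$ is effective by construction and $D \sim K_\Gamma$, the divisor $K_\Gamma - D'$ is equivalent to the effective divisor $D - D'$, hence is itself linearly equivalent to an effective divisor; therefore $r(K_\Gamma - D') \geq 0$. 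By the Tropical Riemann-Roch Theorem, $r(D') - r(K_\Gamma - D') = \deg(D') - g + 1 = 1$, so $r(D') \geq 1$.

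On the other hand, I claim $D'$ is $w_g$-reduced and does not contain $w_g$, which forces $r(D') = 0$ by the reduced-divisor rank bound cited after Proposition~\ref{Prop:ReducedDivisors} (the rank of a $v$-reduced divisor is at most its coefficient at $v$, which here is $0$). To see that $D'$ is $w_g$-reduced, apply Proposition~\ref{Prop:ReducedDivisors}: $D'$ contains exactly one point in each cell $\gamma_i$ and no points on the bridges $\br_1, \dots, \br_{g-1}$, and since $w_g$ is its own cell in the decomposition $\Gamma = \gamma_1 \sqcup \br_1 \sqcup \cdots \sqcup \gamma_g \sqcup \{w_g\}$, and $p_g \in \gamma_g$ rather than at $w_g$, the divisor $D'$ does not contain $w_g$. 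Both conditions (1) and (2) of Proposition~\ref{Prop:ReducedDivisors} are satisfied, so $D'$ is $w_g$-reduced. This contradicts $r(D') \geq 1$, and the lemma follows.

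The one point that requires a moment of care — and the only place the argument could slip — is the claim that a $\gamma_i$, the $i$th loop minus the point $w_i$, is a legitimate "cell" for Proposition~\ref{Prop:ReducedDivisors} in the sense that at most one point of $D'$ lies there: this is automatic here because we chose $D'$ with exactly one chip in each $\gamma_i$, so the hypothesis of Proposition~\ref{Prop:ReducedDivisors} is met by fiat, and no genericity of edge lengths is even needed. Everything else is a direct combination of Tropical Riemann-Roch with the classification of $w_g$-reduced divisors, so there is no genuine obstacle; the lemma is essentially the loop-level refinement of Lemma~\ref{Lem:CanonicalDivisorsOnAnyGraph} obtained by replacing "disjoint open edges whose complement is a tree" with "the loop-minus-$w_i$ cells", whose complement relative to picking one point each is still burned through from $w_g$.
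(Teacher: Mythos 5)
Your proof is correct and takes the same route as the paper: form $D' = p_1 + \cdots + p_g$ from one point of $D$ in each $\gamma_i$, get $r(D') \geq 1$ from Tropical Riemann--Roch since $K_\Gamma - D'$ is equivalent to the effective $D - D'$, and contradict this with $r(D') = 0$ via Proposition~\ref{Prop:ReducedDivisors} since $D'$ is $w_g$-reduced and misses $w_g$. Your side remarks are also on target: genericity of edge lengths is indeed irrelevant here, and the reason Lemma~\ref{Lem:CanonicalDivisorsOnAnyGraph} cannot be cited directly --- which the paper notes explicitly in the remark following this lemma --- is that some $p_i$ may coincide with $v_i$, so $\Gamma \smallsetminus \{p_1,\ldots,p_g\}$ need not be a tree.
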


\begin{proof}
Suppose each cell $\gamma_1, \ldots, \gamma_g$ contains a point of $D$.  Let $p_i$ be a point of $D$ in $\gamma_i$, and let $D' = p_1 + \cdots + p_g$.  Then $K_\Gamma-D'$ is equivalent to an effective divisor, by construction, so the tropical Riemann-Roch Theorem says that $r(D')$ is at least 1.  However, $D'$ is $w_g$-reduced by Proposition~\ref{Prop:ReducedDivisors} and does not contain $w_g$, so $r(D')$ is zero.
\end{proof}

\begin{remark}
Note that the point $p_i$ in the proof of  Lemma~\ref{Lem:CanonicalDivisorsSkipLoops} may be equal to $v_i$ for some $2 \leq i \leq g$ in which case the complement of $\{p_1, \ldots, p_g \}$ is not a tree.  For this reason, the lemma does not follow from Lemma~\ref{Lem:CanonicalDivisorsOnAnyGraph}.  We use Lemma~\ref{Lem:CanonicalDivisorsSkipLoops} to obtain contradictions and prove our main results at the end of Sections~\ref{Sec:RhoZero} and \ref{Section:MainResults}.
\end{remark}

\section{Preliminaries for the proof of injectivity}

Let $X$ be a curve over $K$ with skeleton $\Gamma$, and let $D_X$ be a divisor of degree $d$ and rank $r$ on $X$.  To prove that $X$ is Gieseker-Petri general we must show that the multiplication map $\mu_W$ is injective for every linear subspace $W \subset \cL(D_X)$.  It clearly suffices to consider the case where $W = \cL(D_X)$.  In other words, we must show that
\[
\mu: \cL(D_X) \otimes \cL(K_X - D_X) \rightarrow \cL(K_X)
\]
is injective.

Given Lemma~\ref{Lem:MultiplicationMaps}, a natural strategy is to show that there are bases $\{f_i \}$ and $ \{ g_j \}$ for $\cL(D_X)$ and $\cL(K_X - D_X)$, respectively, such that the set of piecewise linear functions
\[
\{ \trop(f_i) + \trop(g_j) \}_{ij}
\]
is tropically independent.  We prove the existence of such a basis when the Brill-Noether number $\rho(g,r,d)$ is zero.  The following section, which treats this special case, is not logically necessary for the proof of Theorem~\ref{Thm:MainThm}.  However, the basic strategy that we use is the same as in the general case, only the details are simpler.

\begin{remark}
When $\rho(g,r,d)$ is positive, we do not know whether there are bases $\{f_i\}$ and $\{g_j\}$ for $\cL(D_X)$ and $\cL(K_X -D_X)$, respectively, such that $\{ \trop(f_i) + \trop(g_j) \}$ is tropically independent.
\end{remark}

\section{A special case: Brill-Noether number zero} \label{Sec:RhoZero}

The results of this sections are not used in the proof of Theorem~\ref{Thm:MainThm}, but working through this special case where $\rho(g,r,d)$ is zero before proceeding to the proof of the general case should be helpful for most readers.  An overview of the argument is as follows.

We start by assuming that the multiplication map has a kernel, and therefore the tropicalization of the image under $\mu$ of any basis for $\cL(D_X) \otimes \cL(K_X - D_X)$ is tropically dependent.  We use this tropical dependence together with Proposition~\ref{Prop:Obstruction} to construct a divisor in $|K_\Gamma|$ that violates Lemma~\ref{Lem:CanonicalDivisorsSkipLoops}, i.e., a canonical divisor of impossible shape.  When the Brill-Noether number is zero, the bases for $\cL(D_X)$ and $\cL(K_X - D_X)$ are explicit and canonically determined, and we only need to choose one basis for each.

Additional subtleties in the general case include the choice of $g$ different bases for $\cL(D_X)$ and $\cL(K_X - D_X)$, one for each loop in $\Gamma$, and the application of Poincar\'e-Lelong to control the slopes of tropicalizations along the bridges.  Furthermore, the bases are not explicit in the general case, but Lemma~\ref{Lem:Basis} gives the existence of bases with the required properties.

\begin{remark}
For a completely different tropical proof of the Gieseker-Petri Theorem in the case $\rho(g,r,d) = 0$, using lifting arguments instead of tropical independence, see \cite[Proposition~1.6]{LiftingDivisors}.
\end{remark}

Suppose $D_X$ is a divisor of degree $d$ and rank $r$ on $X$, with $\rho(g,r,d) = 0$, and let $D$ be the $v_1$-reduced divisor equivalent to $\Trop(D_X)$.  There are only finitely many $v_1$-reduced divisors of degree $d$ and rank $r$ on $\Gamma$, and they are explicitly classified in \cite{tropicalBN}.  These divisors correspond naturally and bijectively to the rectangular standard tableau with $(g-d+r)$ rows and $(r+1)$ columns.  Note that, since $\rho(g,r,d) = 0$, the genus $g$ factors as
\[
g = (r+1)(g-d+r).
\]
In particular, the entries in the tableau corresponding to $D$ are the integers $1, \ldots, g$.

Fix the tableau corresponding to $D$.  We label the columns from $0$ to $r$, and the rows from $0$ to $g-d + r -1$. The tableau determines a Dyck path, consisting of a series of points $p_0, \ldots, p_g$ in $\ZZ^r$, as follows.  We write $e_0, \ldots, e_{r-1}$ for the standard basis vectors on $\ZZ^r$.  The starting and ending point of the Dyck path is
\[
p_0 = p_g = (r, \ldots, 1),
\]
and the $i$th step $p_i - p_{i-1}$ is equal to
\begin{itemize}
\item the standard basis vector $e_j$ if $i$ appears in the $j$th column of the tableau, for $0 \leq j < r$, or
\item the vector $(-1, \ldots, -1)$ if $i$ appears in the last column.
\end{itemize}
The tableau properties exactly ensure that each $p_i$ lies in the open Weyl chamber $x_0 >  \cdots > x_{r-1} > 0$.  We write $p_i(j)$ for the $j$th coordinate of $p_i$.

The divisor $D$ can be recovered from the Dyck path as follows.  The coefficient of $v_1$ is $r$.  If $i$ appears in the $j$th column of the tableau, for $0 \leq j < r$, then $D$ contains the point on the $i$th loop at distance $p_{i-1}(j) m_i$ modulo $(\ell_i + m_i)$ counterclockwise from $w_i$ with coefficient $1$.  If $i$ appears in the last column of the tableau, then $D$ contains no point in the $i$th loop.

\begin{remark}
In this bijection, adjunction of divisors corresponds to transposition of tableaux \cite[Theorem~39]{AMSW}.  Therefore, the $v_1$-reduced divisor $E$ equivalent to $\Trop(K_X - D_X)$ is exactly the divisor corresponding to the transpose of the tableau for $D$.
\end{remark}


\begin{proposition}
\label{Prop:BasisDivisors}
For each integer $0 \leq j \leq r$, there is a unique divisor $D_j$ equivalent to $D$ such that $D_j - jv_1 - (r-j)w_g$ is effective.  Moreover, $\gamma_i$ contains no point of $D_j$ if and only if $i$ appears in the $j$th column of the tableau corresponding to $D$.
\end{proposition}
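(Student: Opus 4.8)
The plan is to treat this as a pure chip-firing statement on the metric graph $\Gamma$, since the divisor $D$ is explicitly described by a Dyck path and we know exactly where its chips sit on each loop. First I would establish existence: I want a divisor $D_j \sim D$ with $D_j \geq j v_1 + (r-j) w_g$. Since $D$ has rank $r$, the divisor $D - j v_1 - (r-j) w_g$ has degree $d - r$ and, by Riemann–Roch or directly by the definition of rank, is equivalent to an effective divisor; call that effective divisor $E_j$, and set $D_j = E_j + j v_1 + (r-j) w_g$. For uniqueness, I would argue by a reduced-divisor argument: if $D_j$ and $D_j'$ are two such divisors, then $D_j - D_j'$ is principal of degree zero; subtracting the common part $j v_1 + (r-j) w_g$, the difference $E_j - E_j'$ is principal, and I would show both $E_j$ and $E_j'$ must be the unique $v_1$-reduced (or a suitable mixed-base reduced) representative. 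The key point making this work is the genericity of the edge lengths $\ell_i/m_i$, which forces the positions of chips on each loop to be rigidly determined — this is exactly the mechanism used in \cite{tropicalBN}.

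The substantive content is the second sentence: $\gamma_i$ contains no point of $D_j$ if and only if $i$ appears in the $j$th column of the tableau. I would prove this by tracking the divisor loop-by-loop, exactly as $D$ itself is recovered from the Dyck path. The idea is that requiring $D_j$ to contain $j v_1$ forces, after chip-firing the first loop, a specific configuration; then one proceeds inductively across the loops. Concretely, I expect $D_j$ to be built from a "shifted" Dyck path: starting the path not at $p_0 = (r, \dots, 1)$ but at a point determined by $j$, so that the coordinate controlling the position on loop $i$ becomes zero (i.e., the chip lands on $w_i$, hence outside $\gamma_i$, or more precisely the loop contributes no chip in $\gamma_i$) precisely when step $i$ of the path is in column $j$. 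The cleanest formulation is probably: $D_j$ corresponds to the lattice path $p_0^{(j)}, \dots, p_g^{(j)}$ where $p_i^{(j)} = p_i - (\text{something depending on } j)$, with the steps unchanged, and one reads off that the $j$th coordinate of $p_{i-1}^{(j)}$ vanishes exactly when $i$ is in column $j$. I would verify that this shifted path still produces an effective divisor equivalent to $D$ with the prescribed multiplicities at $v_1$ and $w_g$ by a direct chip-firing computation on each loop and bridge.

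The main obstacle I anticipate is the bookkeeping at the endpoints $v_1$ and $w_g$ simultaneously, and handling the bridges. Since $D$ is $v_1$-reduced with coefficient $r$ at $v_1$, pushing $j$ chips to stay at $v_1$ while forcing $r-j$ chips all the way to $w_g$ requires chip-firing through every bridge, and I must check no chips get stranded on a bridge or double up incorrectly — this is where Proposition~\ref{Prop:ReducedDivisors} and Dhar's burning algorithm will do the work of pinning down the shape. A secondary subtlety is the boundary case where the chip on loop $i$ would land exactly at $v_i$ (for $2 \le i \le g$) rather than in the interior of $\gamma_i$; but since $\gamma_i$ is defined as the loop minus $w_i$, the point $v_i$ is in $\gamma_i$, so "contains a point of $\gamma_i$" is the right condition and this case causes no trouble — though it is exactly the phenomenon flagged in the remark after Lemma~\ref{Lem:CanonicalDivisorsSkipLoops}, so I would state it carefully. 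Once the loop-by-loop description is in hand, both existence and uniqueness follow, and the column characterization is immediate from the construction.
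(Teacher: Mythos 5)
Your plan matches the paper's in outline: construct $D_j$ by chip-firing, read off the column characterization loop by loop, and get uniqueness from a reduced-divisor argument. Existence as you describe it (the rank $r$ of $D$ gives an effective representative of $D - jv_1 - (r-j)w_g$) is correct.

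The uniqueness step, however, has a gap. You propose to show that both $E_j = D_j - jv_1 - (r-j)w_g$ and $E_j' = D_j' - jv_1 - (r-j)w_g$ are ``the unique $v_1$-reduced (or mixed-base reduced) representative,'' but $E_j'$ is an arbitrary effective divisor in the class, and there is no a priori reason it should be reduced with respect to any base point. What you actually need is that the class of $E_j$ has a \emph{unique} effective representative, i.e.\ that $E_j$ is rigid. The paper obtains this from the structural output of the chip-firing construction: $E_j$ has at most one chip in each $\gamma_i$, no chips on the bridges, and no chips at any vertex, and by Dhar's burning algorithm such a divisor is $v$-reduced for \emph{every} $v\in\Gamma$, hence cannot move. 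Note that this rigidity is purely combinatorial and does not use genericity of the edge lengths; your attribution of the rigidity to $\ell_i/m_i$ being generic misidentifies the mechanism. (Genericity is used elsewhere, e.g.\ in the explicit construction of \cite[Prop.~4.10]{tropicalBN} to keep the moving pile from parking chips at vertices, and in Proposition~\ref{Prop:ChipsOnEachLoop}.) For the column characterization, the paper simply invokes that explicit construction: a pile of $r-j$ chips moves from $v_1$ to $w_g$, has $p_i(j)$ chips on arriving at $v_i$, and leaves a single chip behind on loop $i$ unless $i$ appears in column $j$. Since $p_i$ stays in the open Weyl chamber, $p_{i-1}(j)$ is always strictly positive, so the ``vanishing coordinate of a shifted Dyck path'' heuristic you sketch is not quite the right picture, though carrying out the loop-by-loop chip-firing computation you propose would land in the same place.
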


\begin{proof}
The divisor $D_r$ is exactly $D$.  The remaining divisors $D_j$, are constructed in the proof of \cite[Proposition~4.10]{tropicalBN}, by an explicit chip-firing procedure.  One takes a pile of $r-j$ chips from $v_1$ and moves it to the right.  The pile of chips changes size as it moves, and has $p_i(j)$ chips when it reaches $v_i$.  As the pile moves across the $i$th loop, there is a single chip left behind in the interior of one of the edges unless $i$ appears in the $j$th loop, in which case the $i$th loop is left empty.  When the pile reaches $w_g$, it has $p_g(j) = r-j$ chips.  Since $j$ chips were left at $v_1$ at the start of the procedure, $D_j - j v_1 - (r-j) w_g$ is effective.  To see that $D_j$ is the unique divisor equivalent to $D$ with this property, note that $D_j - j v_1 - (r-j) w_g$ does not move; it is effective and contains no points on the bridges or at the vertices, and hence is $v$-reduced for every $v$ in $\Gamma$.
\end{proof}

\noindent Similarly, for $0 \leq k \leq g-d+r -1$ there is a unique divisor $E_k$ equivalent to the $v_1$-reduced adjoint divisor $E$ such that $E_k - k v_1 - (g-d+r-1 -k) w_g$ is effective, and $\gamma_i$ contains no point of $E_k$ if and only if $i$ appears in the $k$th row of the tableau.

It follows that the $g$ divisors $D_j + E_k$ are distinct and correspond to the loops of $\Gamma$, as follows.

\begin{corollary}
\label{Cor:PartitionLoops}
The connected subset $\gamma_i \subset \Gamma$ contains no point of $D_j + E_k$ if and only if $i$ appears in the $j$th column and $k$th row of the tableau corresponding to $D$.
\end{corollary}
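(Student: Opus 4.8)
The plan is to combine Proposition~\ref{Prop:BasisDivisors} with the corresponding statement for the adjoint divisors $E_k$, reading off where each of the divisors $D_j + E_k$ fails to meet a cell $\gamma_i$. First I would recall what Proposition~\ref{Prop:BasisDivisors} gives us: for each column index $0 \le j \le r$, the cell $\gamma_i$ contains no point of $D_j$ exactly when $i$ appears in the $j$th column of the tableau for $D$. Symmetrically — and this is the statement asserted just after the proposition for the transposed tableau — for each row index $0 \le k \le g-d+r-1$, the cell $\gamma_i$ contains no point of $E_k$ exactly when $i$ appears in the $k$th row.

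Next I would observe that both $D_j$ and $E_k$ are effective, so their sum $D_j + E_k$ is effective, and a point lies in $\gamma_i$ for $D_j + E_k$ if and only if it lies in $\gamma_i$ for $D_j$ or for $E_k$. Hence $\gamma_i$ contains no point of $D_j + E_k$ if and only if $\gamma_i$ contains no point of $D_j$ \emph{and} $\gamma_i$ contains no point of $E_k$, which by the two facts above happens if and only if $i$ appears in the $j$th column of the tableau and $i$ appears in the $k$th row of the tableau. Since the tableau in question is a standard tableau on the entries $1,\dots,g$, each entry $i$ occupies a unique box, so "$i$ is in column $j$ and $i$ is in row $k$" is precisely the condition that box $(k,j)$ of the tableau contains $i$. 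This gives the desired biconditional, and simultaneously shows the $g$ divisors $D_j + E_k$ are distinct, since they miss distinct cells.

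There is essentially no obstacle here: the only mild point worth stating carefully is that "effective + effective" cannot cancel, so the support (in the sense of which cells are met) of $D_j + E_k$ is exactly the union of the supports of $D_j$ and $E_k$ — there is no subtraction of divisors anywhere, so no cancellation can occur. Everything else is a direct translation of Proposition~\ref{Prop:BasisDivisors}, its adjoint analogue, and the combinatorial fact that each integer occupies exactly one box of a standard tableau. I would write this up in two or three sentences invoking Proposition~\ref{Prop:BasisDivisors} and the remark following it, with the effectivity observation made explicit.
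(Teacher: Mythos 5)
Your proof is correct and is precisely the argument the paper leaves implicit (the paper states the corollary with no proof, only ``It follows''). You correctly identify the two needed inputs — Proposition~\ref{Prop:BasisDivisors} and its adjoint analogue for $E_k$ — and the small but essential observation that since $D_j$ and $E_k$ are both effective, the sum $D_j + E_k$ meets $\gamma_i$ if and only if at least one summand does, so no cancellation can occur.
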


\begin{proposition}
\label{Prop:RhoZeroBasis}
There is a basis $f_0, \ldots, f_r$ for $\cL(D_X)$ such that
\[
\Trop(D_X + \ddiv(f_j)) = D_j.
\]
\end{proposition}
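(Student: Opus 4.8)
The plan is to produce the functions $f_j$ by specializing carefully chosen rational functions in $\cL(D_X)$, using Baker's Specialization Lemma (Lemma~\ref{BakerSpecialization}) to control their tropicalizations against the explicit divisors $D_j$ from Proposition~\ref{Prop:BasisDivisors}. First I would observe that $\Trop(D_X)$ is equivalent to $D$, so after replacing $D_X$ by a linearly equivalent divisor (which changes nothing for the purpose of the multiplication map) we may assume $\Trop(D_X) = D$ on the nose. Now fix $0 \le j \le r$. The strategy is to find $f_j \in \cL(D_X)$ whose tropicalization $\psi_j = \trop(f_j)$ satisfies $D + \ddiv(\psi_j) = D_j$; since $D_j - j v_1 - (r-j) w_g$ is effective, a natural approach is to show that the subspace of $\cL(D_X)$ consisting of functions vanishing to order at least $j$ along the component of the special fiber corresponding to $v_1$ and order at least $r-j$ along the component corresponding to $w_g$ is nonzero, and that any nonzero $f_j$ in this subspace does the job.

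The key steps, in order: (1) Identify the relevant vanishing conditions. A type-2 point $y$ of $X^\an$ in $\Gamma$ has an associated valuation, and $\val_y(f_j) = \psi_j(y)$; requiring $\psi_j(v_1) \ge j$ and $\psi_j(w_g) \ge r-j$ (after suitable normalization of the additive constant) carves out a linear subspace of $\cL(D_X)$. (2) Dimension count: by the theory recalled in the introduction, the relevant $R$-submodule $\cW \subset \cL(D_X)$ of sections satisfying these vanishing conditions has corank at most $j + (r-j) = r$ in the $(r+1)$-dimensional space $\cL(D_X)$, hence is nonzero; pick any nonzero $f_j \in \cW$. (3) Show $\psi_j := \trop(f_j)$, normalized so that $D + \ddiv(\psi_j)$ is the divisor whose class we want, lies in $R(D)$ with $D + \ddiv(\psi_j)$ effective, and satisfies the inequality $D + \ddiv(\psi_j) \ge j v_1 + (r-j) w_g$. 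By Lemma~\ref{BakerSpecialization} applied to $D_X - j\cdot(\text{point over }v_1) - (r-j)\cdot(\text{point over }w_g)$, or directly from the Poincar\'e--Lelong formula, the slopes of $\psi_j$ at the relevant points give exactly these vanishing orders. (4) Invoke the uniqueness statement in Proposition~\ref{Prop:BasisDivisors}: the only divisor equivalent to $D$ dominating $j v_1 + (r-j) w_g$ is $D_j$ itself (it is $v$-reduced for every $v$), so $D + \ddiv(\psi_j) = D_j$, i.e. $\Trop(D_X + \ddiv(f_j)) = D_j$. (5) Finally, check that $f_0, \dots, f_r$ form a basis of $\cL(D_X)$: since $\rho = 0$, the divisors $D_0, \dots, D_r$ are pairwise distinct (they differ in which loop is left empty, by Proposition~\ref{Prop:BasisDivisors}), so the functions $f_j$ have pairwise distinct divisors, hence are pairwise non-proportional; as $\dim \cL(D_X) = r+1$, it suffices to show they are linearly independent, which follows because for each $j$ the function $f_j$ is the unique one (up to scalar) vanishing to the prescribed orders at the two points while the others do not all vanish there — more precisely, one orders the $f_j$ by vanishing order at $v_1$ and runs the standard argument that functions with strictly increasing orders of vanishing at a point are independent.

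The main obstacle I expect is step (4) together with the precise bookkeeping in step (3): one must be careful that the chip-firing function $\psi_j$ on $\Gamma$ realizing the passage from $D$ to $D_j$ actually arises as $\trop(f_j)$ and not merely as an element of the (much larger) tropical linear series $R(D)$ — this is exactly the subtlety flagged in the remark after Lemma~\ref{Lem:MultiplicationMaps} and in the last remark of Section~\ref{Section:Background}. The resolution is that here we are not trying to realize a prescribed piecewise linear function but only a prescribed \emph{divisor} $D_j$ in $|D|$, and the vanishing-order conditions at $v_1$ and $w_g$ are algebraic conditions that cut out a nonzero subspace of $\cL(D_X)$ by the corank estimate; the rigidity of $D_j$ (it is $v$-reduced everywhere and does not move) then forces any function satisfying these conditions to have the desired divisor on $\Gamma$. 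A secondary subtlety is making sure the normalization constants are chosen consistently so that "$D + \ddiv(\psi_j)$ effective and $\ge jv_1 + (r-j)w_g$" is the correct translation of the algebraic vanishing conditions; this is routine once one fixes a uniformizer and tracks the Poincar\'e--Lelong formula at the endpoints $v_1$ and $w_g$ of the skeleton.
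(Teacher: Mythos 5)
Your core argument matches the paper's proof: pick $K$-points $x, y$ specializing to $v_1, w_g$, use the rank-$r$ hypothesis to find $f_j \in \cL(D_X)$ with $D_X + \ddiv(f_j) \geq jx + (r-j)y$, tropicalize, and invoke the uniqueness from Proposition~\ref{Prop:BasisDivisors}; linear independence of the resulting $f_j$ then follows from their distinct vanishing orders at $x$, as you note. Two small corrections: the vanishing conditions must be imposed at the closed points $x$ and $y$ (so that the effective divisor $\Trop(D_X+\ddiv(f_j))$ dominates $jv_1+(r-j)w_g$), not as a lower bound on the values $\psi_j(v_1)$, $\psi_j(w_g)$ as phrased in your step (1), which is an unrelated and inadequate condition; and none of the Specialization Lemma, Poincar\'e--Lelong, the $R$-submodule formalism, or the normalization $\Trop(D_X)=D$ is actually needed --- the paper gets by with the elementary facts that $\Trop$ sends effective divisors to effective divisors and respects linear equivalence, combined directly with the definition of rank.
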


\begin{proof}
Let $x$ and $y$ be points in $X(K)$ specializing to $v_1$ and $w_g$, respectively.  Since $D_X$ has rank $r$, there is a rational function $f_j \in \cL(D_X)$ such that $D_X + \ddiv(f_j)$ contains $x$ with coefficient at least $j$ and $y$ with coefficient at least $r - j$.  Then $\Trop(D_X + \ddiv(f_j))$ is an effective divisor and contains $v_1$ and $w_g$ with coefficient at least $j$ and $r-j$, respectively.  By Proposition \ref{Prop:BasisDivisors}, $\Trop(D_X + \ddiv(f_j)$ must be equal to $D_j$.
\end{proof}

\noindent  Similarly, there is a basis $\{g_0, \ldots, g_{g-d+r-1} \}$ for $\cL(K_X - D_X)$ such that
\[
\Trop(K_X - D_X + \ddiv(g_k)) = E_k.
\]
We proceed to study the piecewise linear functions
\[
\phi_j = \trop(f_j) \mbox{ \ \ and \ \ } \psi_k = \trop(g_k).
\]
Note that $D + \ddiv(\phi_j) = D_j$ and $E + \ddiv(\psi_k) = E_k$, and this determines each $\phi_j$ and $\psi_k$ up to an additive constant, .

\begin{theorem}
\label{Thm:RhoEqualsZero}
The set of $g$ piecewise linear functions $\{\phi_j + \psi_k\}_{jk}$ is tropically independent.
\end{theorem}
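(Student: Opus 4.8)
The plan is to argue by contradiction: suppose $\{\phi_j + \psi_k\}_{jk}$ is tropically dependent, so there are real numbers $b_{jk}$ such that the pointwise minimum
\[
\theta = \min_{jk} \{ \phi_j + \psi_k + b_{jk} \}
\]
achieves its minimum at least twice at every point of $\Gamma$. Since each $\phi_j \in R(D)$ and each $\psi_k \in R(E)$, each function $\phi_j + \psi_k$ lies in $R(D+E) = R(K_\Gamma)$ (using $D + E \sim K_\Gamma$, which follows from the adjunction remark in Section~\ref{Section:Background}), and hence $\theta \in R(K_\Gamma)$ as well. Thus $\ddiv(\theta) + K_\Gamma$ — or rather $\ddiv(\theta)$ added to a fixed effective representative of $|K_\Gamma|$ — is an effective canonical divisor, and the goal is to show it contains a point in \emph{every} cell $\gamma_i$, contradicting Lemma~\ref{Lem:CanonicalDivisorsSkipLoops}.

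The key step is to fix an index $i$ and show $\ddiv(\theta) + K_\Gamma$ contains a point of $\gamma_i$ by applying Proposition~\ref{Prop:Obstruction} with $\Gamma' = \gamma_i$. For that I need: for every pair $(j,k)$, the divisor $\ddiv(\phi_j + \psi_k) + (D+E) = D_j + E_k$ contains a point of $\gamma_i$. By Corollary~\ref{Cor:PartitionLoops}, $\gamma_i$ fails to contain a point of $D_j + E_k$ \emph{only} for the unique pair $(j,k)$ such that $i$ lies in the $j$th column and $k$th row of the tableau. So Proposition~\ref{Prop:Obstruction} does not immediately apply; we have one bad pair $(j_0, k_0)$. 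The fix is to run Proposition~\ref{Prop:Obstruction} after discarding that one function: the point is that $\theta$ cannot equal $\phi_{j_0} + \psi_{k_0} + b_{j_0 k_0}$ everywhere on $\gamma_i$, because the minimum is attained at least twice everywhere, so on $\gamma_i$ the minimum of the \emph{remaining} $g-1$ functions $\phi_j + \psi_k + b_{jk}$, $(j,k)\neq(j_0,k_0)$, also equals $\theta$ at every point of $\gamma_i$; then apply the same reasoning as in the proof of Proposition~\ref{Prop:Obstruction} to the connected set $\gamma_i$ and this smaller collection, each member of which does contain a point of $\gamma_i$ in its divisor. More precisely: let $\theta' = \min_{(j,k)\neq(j_0,k_0)} \{\phi_j + \psi_k + b_{jk}\}$; then $\theta' = \theta$ on $\gamma_i$ since the min is attained twice; so it suffices to show $\ddiv(\theta') + (D+E)$ contains a point of $\gamma_i$, which is exactly Proposition~\ref{Prop:Obstruction} applied to $\Gamma' = \gamma_i$ and the functions $\phi_j+\psi_k$ with $(j,k) \neq (j_0,k_0)$, since each such $D_j + E_k$ meets $\gamma_i$ by Corollary~\ref{Cor:PartitionLoops}, and a coefficient forced to be positive on $\gamma_i$ for $\theta'$ is also positive for $\theta$ wherever the two agree (and they agree on all of $\gamma_i$).

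The main obstacle — or rather the one point requiring care — is precisely this passage from $\theta$ to $\theta'$: one must make sure that the divisor-point produced by Proposition~\ref{Prop:Obstruction} for $\theta'$ really is a point of $\ddiv(\theta) + (D+E)$, i.e. that agreeing with $\theta'$ on $\gamma_i$ suffices to transfer the positivity of the coefficient. This works because Lemma~\ref{Lem:MinChips} computes $\ord_v$ locally: at any $v \in \gamma_i$, $\theta$ is the minimum of finitely many linear functions, and $\theta = \theta'$ on a neighborhood of $v$ within $\gamma_i$; since $\gamma_i$ is open in $\Gamma$ (each $\gamma_i$ is a union of two half-open edges, open as a subset of $\Gamma$), the local structure of $\theta$ and $\theta'$ near $v$ coincide, so $\ord_v(\theta) \geq \ord_v(\theta')$ — in fact one checks $\ord_v(\theta) = \ord_v(\theta')$ when the extra function is not locally minimal, and $\geq$ otherwise — giving the claim. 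Running this for each $i = 1, \ldots, g$ produces an effective divisor equivalent to $K_\Gamma$ meeting every cell $\gamma_i$, contradicting Lemma~\ref{Lem:CanonicalDivisorsSkipLoops}, and the theorem follows.
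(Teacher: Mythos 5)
Your proposal is correct and follows the same structure as the paper's proof: contradiction, $\theta$ as a twice-attained minimum, identification of the unique bad pair $(j_0,k_0)$ via Corollary~\ref{Cor:PartitionLoops}, Proposition~\ref{Prop:Obstruction} applied to the remaining $g-1$ summands, and Lemma~\ref{Lem:CanonicalDivisorsSkipLoops} for the contradiction.

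One simplification you missed, which is what the paper does: since the minimum is attained at least twice at \emph{every} point of $\Gamma$, deleting the single function $\phi_{j_0} + \psi_{k_0} + b_{j_0 k_0}$ from the collection does not change the minimum \emph{anywhere}; that is, $\theta' = \theta$ on all of $\Gamma$, not merely on $\gamma_i$. With this global equality, Proposition~\ref{Prop:Obstruction} applies directly to $\theta$ (rewritten as the minimum over $(j,k) \neq (j_0,k_0)$) with $\Gamma' = \gamma_i$, and there is no "transfer of positivity" step and no need to reason about local agreement. Your extra step is not wrong, but it is where you introduce a small inaccuracy: you assert that $\gamma_i$ is open in $\Gamma$, which is false for $i \geq 2$, since $v_i \in \gamma_i$ has the bridge edge $\br_{i-1}$ incident to it, and any $\Gamma$-neighborhood of $v_i$ contains points of that bridge outside $\gamma_i$. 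Because $\theta' = \theta$ globally, this topological slip is immaterial, but the cleanest route is simply to note the global equality and skip the locality argument altogether.
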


\begin{proof}
Suppose that $\{\phi_j + \psi_k\}_{jk}$ is tropically dependent.  Then there exist real numbers $b_{jk}$ such that the minimum
\[
\theta = \min_{j,k} \{\phi_j + \psi_k + b_{jk} \}
\]
occurs at least twice at every point in $\Gamma$.  Note that $D + E + \ddiv(\theta)$ is an effective canonical divisor, since $R(D+E)$ is a tropical module and $D$ and $E$ are adjoint.

We claim that $D + E + \ddiv \theta$ contains a point in each $\gamma_i$.  Choose $j_0$ and $k_0$ such that $i$ appears in the $j_0$th column and $k_0$th row of the tableau corresponding to $D$.  Then Corollary~\ref{Cor:PartitionLoops} says that $D + E + \ddiv(\phi_j + \psi_k + b_{jk})$ contains a point in $\gamma_i$ for $(j,k) \neq (j_0, k_0)$. Also, since the minimum of $\{\phi_j +  \psi_k + +b_{jk} \}$ occurs at least twice at every point of $\Gamma$, we have
\[
\theta = \min_{(j,k) \neq (j_0,k_0)} \{ \phi_j + \psi_k + b_{jk} \}.
\]
Therefore, by Proposition~\ref{Prop:Obstruction}, the divisor $D + E + \ddiv(\theta)$ contains a point in $\gamma_i$, as claimed.

We have shown that $D + E + \ddiv(\theta)$ is an effective canonical divisor that contains a point in each of $\gamma_1, \ldots, \gamma_g$.  But this is impossible, by  Lemma~\ref{Lem:CanonicalDivisorsSkipLoops}.
\end{proof}

\section{Proof of Theorem~\ref{Thm:MainThm}}
\label{Section:MainResults}

As in the previous two sections, let $X$ be a smooth projective curve of genus $g$ over $K$ with skeleton $\Gamma$.  Since skeletons are invariant under base change with respect to extensions of algebraically closed valued fields, we can and do assume that $K$ is spherically complete.

\begin{remark}
Spherical completeness is equivalent to completeness for discretely valued fields, but stronger in general.  We use spherical completeness only in the proof of Lemma~\ref{Lem:Basis}, to ensure that normed $K$-vector spaces have orthogonal bases.
\end{remark}

Let $D_X$ be an effective divisor on $X$.  We must show that the multiplication map
\[
\mu: \cL(D_X) \otimes \cL(K_X - D_X) \rightarrow \cL(D_X)
\]
is injective.  This is trivial if $\cL(K_X - D_X)$ is zero, so we assume there is an effective divisor $E_X$ equivalent to $K_X - D_X$.  We may also assume $v_1$ and $w_g$ are type-2 points and choose type-2 points $w_0$ and $v_{g+1}$ in the connected components of $X^\an \smallsetminus \Gamma$ with boundary points $v_1$ and $w_g$, respectively.  Then
\[
V = \{v_1, \ldots, v_{g+1}, w_0, \ldots, w_g \}
\]
is a semistable vertex set, with skeleton $\Gamma_V \supset \Gamma$ as shown.

\begin{figure}[H]
\begin{tikzpicture}

\draw [ball color=black] (-3,0) circle (0.55mm);
\draw (-3.25,0.25) node {\footnotesize $w_0$};
\draw (-3,0)--(-2,0);

\draw [ball color=black] (-2,0) circle (0.55mm);
\draw (-2.2,-0.2) node {\footnotesize $v_1$};
\draw (-1.5,0) circle (0.5);
\draw (-1,0)--(0,0.5);
\draw [ball color=black] (-1,0) circle (0.55mm);
\draw (-0.85,0.3) node {\footnotesize $w_1$};
\draw (0.7,0.5) circle (0.7);
\draw (1.4,0.5)--(2,0.3);
\draw [ball color=black] (1.4,0.5) circle (0.55mm);
\draw [ball color=black] (0,0.5) circle (0.55mm);
\draw (-0.2,0.75) node {\footnotesize $v_2$};
\draw (2.6,0.3) circle (0.6);
\draw (3.2,0.3)--(3.87,0.6);
\draw [ball color=black] (2,0.3) circle (0.55mm);
\draw [ball color=black] (3.2,0.3) circle (0.55mm);
\draw [ball color=black] (3.87,0.6) circle (0.55mm);
\draw (4.5,0.3) circle (0.7);
\draw (5.16,0.5)--(5.9,0);
\draw (6.4,0) circle (0.5);
\draw [ball color=black] (5.16,0.5) circle (0.55mm);
\draw (5.48,0.74) node {\footnotesize $w_{g-1}$};
\draw [ball color=black] (5.9,0) circle (0.55mm);
\draw [ball color=black] (6.9,0) circle (0.55mm);
\draw (5.7,-.2) node {\footnotesize $v_g$};
\draw (7.3,-.2) node {\footnotesize $w_g$};

\draw [<->] (3.3,0.4) arc[radius = 0.715, start angle=10, end angle=170];
\draw [<->] (3.3,0.2) arc[radius = 0.715, start angle=-9, end angle=-173];

\draw (2.5,1.25) node {\footnotesize$\ell_i$};
\draw (2.75,-0.7) node {\footnotesize$m_i$};

\draw [ball color=black] (7.9,0) circle (0.55mm);
\draw (8.15,0.25) node {\footnotesize $v_{g+1}$};
\draw (6.9,0)--(7.9,0);

\end{tikzpicture}
\caption{The skeleton $\Gamma_V$.}
\end{figure}
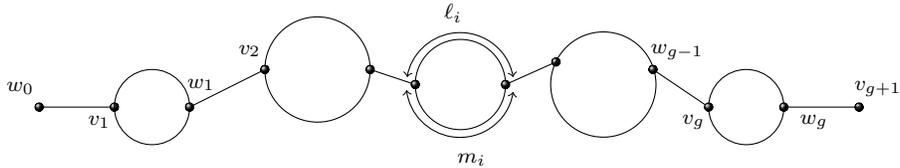

Let $\cX_V$ be the semistable model of $X$ associated to $V$, with $X_i$ the component of the special fiber $\ocX_V$ corresponding to $v_i$, and $x_i \in X_i$ the node corresponding to the edge $e_i = [w_{i-1}, v_i]$, for $1 \leq i \leq g + 1$.

Recall that the reduction of $f$ in $\kappa(X_i)^*$ is the residue of $a\hspace{-0.1em}f$ with respect to the valuation $\val_{v_i}$ on $K(X)$, where $a \in K^*$ is chosen such that $\val_{v_i}(a\hspace{-0.1em}f) = 0$ \cite{AminiBaker12}.  This reduction is defined only up to multiplication by elements of $\kappa^*$, but its order of vanishing at $x_i$ is independent of all choices.  Similarly, if $f_0, \ldots, f_r$ are rational functions in $K(X)^*$, then the $\kappa$-span of their reductions in $\kappa(X_i)$ is independent of all choices.  In particular, it makes sense to talk about whether these reductions are linearly independent.

\begin{lemma} \label{Lem:Basis}
Let $D_X$ be a divisor of rank $r$ on $X$. For each $1 \leq i \leq g$, there is a basis $f_0, \ldots, f_r$ for $\cL(D)$ such that
\begin{enumerate}
\item the reductions of $f_0, \ldots, f_r$ in $\kappa(X_i)$ have distinct orders of vanishing at $x_i$, and
\item the reductions of $f_0, \ldots, f_r$ in $\kappa(X_{i+1})$ are linearly independent.
\end{enumerate}
\end{lemma}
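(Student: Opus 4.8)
The plan is to work with the norm on $\cL(D_X)$ coming from the point $v_{i+1}$ and the norm coming from $v_i$, and to exploit spherical completeness to find a single basis that is simultaneously adapted to both. Concretely, define $\|f\|_{v_{i+1}} = \exp(-\val_{v_{i+1}}(f))$ for $f \in \cL(D_X)$; since $K$ is spherically complete, $\cL(D_X)$ admits an orthogonal basis $g_0, \dots, g_r$ with respect to this norm. Scaling each $g_j$ so that $\val_{v_{i+1}}(g_j) = 0$, orthogonality implies that the reductions $\overline{g_0}, \dots, \overline{g_r}$ in $\kappa(X_{i+1})$ are linearly independent: any $\kappa$-linear relation among the reductions would lift to an element of $\cL(D_X)$ of $v_{i+1}$-valuation strictly positive that is a $K$-combination of the $g_j$ with unit coefficients, contradicting orthogonality. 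This secures condition (2).

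Next I would modify this basis to also achieve condition (1), without destroying (2). The reductions of the $g_j$ in $\kappa(X_i)$ span an $(r+1)$-dimensional subspace $W_i \subset \kappa(X_i)$, which is the reduction of $\cL(D_X)$ and restricts to a complete linear series of degree $d$ and rank $r$ on $X_i$ — here is where the hypothesis that $D_X$ has rank $r$, combined with Baker's Specialization Lemma (Lemma~\ref{BakerSpecialization}) and the semicontinuity of rank, is used to guarantee that the reduced linear series on $X_i$ still has dimension $r$. On any $(r+1)$-dimensional linear series on a smooth curve, the vanishing orders at a fixed point $x_i$ take $r+1$ distinct values, realized by a basis in ``echelon form'' with respect to the flag of vanishing at $x_i$. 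Pick such a basis $\overline{h}_0, \dots, \overline{h}_r$ of $W_i$, and lift it to a new basis $f_0, \dots, f_r$ of $\cL(D_X)$ over $K$ by taking the appropriate $K$-linear combinations of the $g_j$; normalizing $\val_{v_i}(f_j) = 0$, the reductions of the $f_j$ in $\kappa(X_i)$ are precisely $\overline{h}_0, \dots, \overline{h}_r$ up to $\kappa^*$-scaling, so they have distinct orders of vanishing at $x_i$. Finally I must check that passing from $\{g_j\}$ to $\{f_j\}$ preserves condition (2): since the change-of-basis matrix has entries in $K$ and is invertible, and orthogonal bases are preserved under base change by $\mathrm{GL}_{r+1}(R)$ — or, more robustly, since linear independence of reductions in $\kappa(X_{i+1})$ is a property of the $\kappa$-span of the reductions, which only depends on the $K$-span $\cL(D_X)$ and not on the choice of basis — condition (2) holds automatically for any basis, in particular for $\{f_j\}$.

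Let me reconsider that last point, since it is actually the crux. The claim ``the reductions in $\kappa(X_{i+1})$ are linearly independent'' is \emph{not} basis-independent in general: if $\{g_j\}$ is orthogonal at $v_{i+1}$ then its reductions are independent, but an arbitrary $K$-linear recombination, after renormalizing valuations, need not have independent reductions. So the real work is to choose the echelon basis at $x_i$ \emph{within the orthogonal framework} at $v_{i+1}$. The right way to do this: order the $g_j$ so that the reduction $\overline{g_0}, \dots, \overline{g_r}$ in $\kappa(X_i)$, read off after normalizing at $v_i$, can be put in echelon form by an \emph{upper-triangular} change of basis with \emph{unit} diagonal and integer-valuation off-diagonal entries chosen so that no cancellation of $v_{i+1}$-valuations occurs. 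Equivalently, one performs Gaussian elimination on the reductions in $\kappa(X_i)$ using row operations $f_j \mapsto f_j + c \cdot f_{j'}$ with $\val(c) \geq 0$; each such operation, applied to a $v_{i+1}$-orthogonal basis, again yields a $v_{i+1}$-orthogonal basis (the ultrametric inequality makes $\val_{v_{i+1}}(f_j + c f_{j'}) = \min$ of the two, with the minimum attained, since orthogonality means the minimum in $f_j + cf_{j'}$ is attained at a unique index). Thus the main obstacle — and the step I expect to require the most care — is verifying that the elimination bringing the $\kappa(X_i)$-reductions into echelon form can be carried out using only such ``integral'' row operations, so that orthogonality at $v_{i+1}$, hence condition (2), survives; this is essentially the argument of \cite[Lemma~1.2]{EisenbudHarris83c} transported to the nonarchimedean setting, and it should go through once one sets up the two normalizations and the order in which the $g_j$ are listed correctly.
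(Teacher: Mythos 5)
Your overall strategy is the same as the paper's: exploit spherical completeness to build a basis of $\cL(D_X)$ that is orthogonal for the norm at one vertex and, after reduction, in echelon form at the node on the other component. Where you differ is the \emph{order} of the two operations. The paper first chooses a basis orthonormal with respect to $| \ |_i$, then puts the reductions in $\kappa(X_i)$ into echelon form at $x_i$ by a change of basis with entries in $R$ and unit determinant (which preserves $| \ |_i$-orthonormality, so nothing is lost), and only then performs Gram--Schmidt at $v_{i+1}$ going up the flag $f_0, \ldots, f_j$; the claim is that this last step keeps the vanishing orders at $x_i$ while forcing independence of the reductions at $v_{i+1}$. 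You instead secure $| \ |_{i+1}$-orthogonality first and then try to echelonize at $x_i$.

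The step you correctly identify as the crux is in fact a genuine gap as you have left it, and your reversed order makes it sharper. The row operations that echelonize the reductions in $\kappa(X_i)$ must be integral with respect to the \emph{$v_i$-normalization} (so that they reduce meaningfully), but preserving $| \ |_{i+1}$-orthogonality requires them to be ``small'' with respect to the \emph{$v_{i+1}$-normalization}: if $g_j$ and $g_{j'}$ are normalized so $\|g_j\|_{i+1} = \|g_{j'}\|_{i+1} = 1$, the replacement $g_j \mapsto g_j + c' g_{j'}$ preserves $v_{i+1}$-orthogonality precisely when $\val(c') \geq 0$, but after rewriting the $v_i$-normalized elimination $\tilde g_j \mapsto \tilde g_j + c\,\tilde g_{j'}$ (with $c \in R$) in $v_{i+1}$-normalized coordinates, the coefficient picks up a factor $t_j/t_{j'}$ coming from the discrepancy between the two normalizations, and there is no reason for its valuation to stay nonnegative. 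So your parenthetical claim that ``each such operation, applied to a $v_{i+1}$-orthogonal basis, again yields a $v_{i+1}$-orthogonal basis'' is not established, and the closing sentence of your proposal concedes exactly this. The paper's ordering sidesteps the analogous difficulty partially: after orthonormalizing at $v_i$, \emph{all} the echelonizing operations are unit-triangular over $R$ in the $v_i$-normalization, so condition (1) is achieved without touching the $v_{i+1}$ side at all; only afterward does one do Gram--Schmidt at $v_{i+1}$, and since this goes up the filtration the new $f_j'$ is $f_j$ plus terms whose $\kappa(X_i)$-reductions all vanish to strictly higher order at $x_i$, which is what makes the paper's ``does not change the order of vanishing'' plausible (the point being to choose the \emph{suitable} combination among those orthogonal to $\mathrm{span}(f_0,\ldots,f_{j-1})$). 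Two smaller points: your appeal to Baker's Specialization Lemma and semicontinuity to see that the reduced space has dimension $r+1$ is unnecessary --- this is immediate from $\cL(D_X)$ being strictly $K$-cartesian --- and the reduced space need not be a ``complete linear series of degree $d$ and rank $r$''; all that is used is that it is an $(r+1)$-dimensional space of rational functions, so an echelon basis at $x_i$ exists.
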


\begin{proof}
We consider $\cL(D_X)$ as a normed vector space over $K$, with respect to the norms $| \ |_i$ and $| \ |_{i+1}$ whose logarithms are $-\val(v_i)$ and $-\val(v_{i+1})$, respectively, and use the basic properties of nonarchimedean normed vector spaces developed in \cite[Chapter~2]{BGR84}.  Since $K$ is spherically complete, the vector space $\cL(D_X)$ is $K$-cartesian \cite[2.4.4.2]{BGR84}, and since $v_i$ and $v_{i+1}$ are type-2 points, the image of $\cL(D_X)$ under each of these norms is equal to the image of $K$ under its given norm.   Therefore, $\cL(D_X)$ is strictly $K$-cartesian \cite[2.5.1.2]{BGR84}, which means that all of its subspaces have orthonormal bases.  So, first choose an orthonormal basis for $\cL(D_X)$ with respect to $| \ |_i$.  The reductions of these basis elements are linearly independent \cite[2.5.1.3]{BGR84}, so we can take suitable combinations with coefficients in $R^*$ to ensure that they have distinct orders of vanishing at $x_i$.

Let $f_0, \ldots, f_r$ be a basis for $\cL(D_X)$ whose reductions in $\kappa(X_i)$ have strictly decreasing order of vanishing at $x_i$. Then, for each $j$, we can replace $f_j$ by a suitable linear combination of $f_0, \ldots, f_j$ that is orthogonal to the span of $f_0, \ldots, f_{j-1}$ with respect to $| \ |_{i+1}$.  This does not change the order of vanishing at $x_i$ of the reduction in $\kappa(X_i)$, but ensures that the reductions in $\kappa(X_{i+1})$ are linearly independent.
\end{proof}

\noindent This lemma, closely analogous to \cite[Lemma~1.2]{EisenbudHarris83c}, will be especially useful in combination with the following identity relating orders of vanishing of reductions of rational functions to the slopes of their tropicalizations.   For any piecewise linear function $\psi$ on $\Gamma_V$, we write $s_i(\psi)$ for the incoming slope of $\psi$ at $v_i$ along $e_i$.  Suppose $\psi = \trop(f)$ for some rational function $f$ in $K(X)^*$. Then Thuillier's nonarchimedean analytic Poincar\'e-Lelong formula \cite{ThuillierThesis, BPR11} says that $s_i(\trop(f))$ is the order of vanishing at $x_i$ of the reduction of $f$ in $\kappa(X_i)$.

Fix a basis $f_0, \ldots, f_r$ for $\cL(D_X)$ whose reductions in $\kappa(X_i)$ have distinct orders of vanishing at $x_i$, and whose reductions at $X_{i+1}$ are linearly independent.  Let $a_0, \ldots, a_r$ be constants in $K$.  Define
\[
\psi = \trop(a_0 f_0 + \cdots + a_r f_r).
\]
and
\[
\psi' = \min \{ \trop(f_0) + \val(a_0),  \ldots, \trop(f_r) + \val(a_r) \}.
\]
Note that
\[
\psi(v) \geq \psi'(v)
\]
for all $v$, with equality when $v$ is equal to $v_i$ or $v_{i+1}$.  This is because the reductions of the $a_j f_j$ in both $\kappa(X_i)$ and $\kappa(X_{i+1})$ are linearly independent.

\begin{proposition} \label{Prop:SumEqualsMin}
The piecewise linear functions $\psi$ and $\psi'$ are equal on some nonempty interval $(v, v_i) \subset e_i$.
\end{proposition}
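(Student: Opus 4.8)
The plan is to analyze the behavior of $\psi$ and $\psi'$ in a small neighborhood of $v_i$ inside the edge $e_i$, using the Poincar\'e-Lelong formula together with the hypotheses on the basis $f_0, \ldots, f_r$. Recall that $\psi(v) \ge \psi'(v)$ everywhere with equality at $v_i$, so it suffices to show the two functions agree on a one-sided neighborhood of $v_i$ along $e_i$; since $\psi$ and $\psi'$ are both piecewise linear, it is enough to show they have the same incoming slope at $v_i$ along $e_i$.

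First I would compute the incoming slope of $\psi'$ at $v_i$. Since the reductions of the $a_j f_j$ in $\kappa(X_i)$ are linearly independent and have distinct orders of vanishing at $x_i$, at the point $v_i$ the minimum $\psi'(v_i) = \min_j\{\trop(f_j)(v_i) + \val(a_j)\}$ is achieved by exactly one index $j_0$, namely the one whose reduction has the \emph{smallest} order of vanishing at $x_i$ once the normalization is taken into account; more precisely, after rescaling so all $\trop(f_j)(v_i)+\val(a_j)$ are compared, the unique minimizer persists on a half-open segment $(v, v_i] \subset e_i$ because a piecewise linear function that is strictly minimal at a point remains strictly minimal on a neighborhood. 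Hence on this segment $\psi' = \trop(f_{j_0}) + \val(a_{j_0})$, and in particular $s_i(\psi') = s_i(\trop(f_{j_0}))$, which by Poincar\'e-Lelong is the order of vanishing at $x_i$ of the reduction of $f_{j_0}$ in $\kappa(X_i)$.

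Next I would compute $s_i(\psi)$. By Poincar\'e-Lelong again, $s_i(\psi) = s_i(\trop(\sum_j a_j f_j))$ equals the order of vanishing at $x_i$ of the reduction of $\sum_j a_j f_j$ in $\kappa(X_i)$. Now I would argue that this reduction is, up to a unit scalar, the reduction of the term with the smallest order of vanishing: after scaling $\sum a_j f_j$ by $a \in K^*$ so that $\val_{v_i}(a\sum a_j f_j)=0$, only those $j$ with $\val_{v_i}(a\, a_j f_j)=0$ contribute to the reduction, and these are precisely the $j$ achieving the minimum in the definition of $\psi'(v_i)$ — by the linear independence hypothesis this is the single index $j_0$. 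Therefore the reduction of $\sum_j a_j f_j$ in $\kappa(X_i)$ is a nonzero scalar multiple of the reduction of $f_{j_0}$, so it has the same order of vanishing at $x_i$. This gives $s_i(\psi) = s_i(\psi')$, and combined with $\psi(v_i)=\psi'(v_i)$ and piecewise linearity, $\psi$ and $\psi'$ agree on a nonempty interval $(v, v_i) \subset e_i$.

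The main obstacle, and the step requiring care, is the claim that the reduction of $\sum_j a_j f_j$ in $\kappa(X_i)$ coincides (up to a unit) with the reduction of the single dominant term $f_{j_0}$ — i.e., that there is no cancellation among the reductions. This is exactly where the hypothesis that the reductions of $f_0,\dots,f_r$ in $\kappa(X_i)$ have \emph{distinct} orders of vanishing at $x_i$ is used: distinct orders of vanishing force the reductions with minimal valuation to be a single function rather than a linearly dependent collection that could sum to something of higher order. One must also be slightly careful that "minimal in the $\psi'$ sense at $v_i$" and "minimal order of vanishing of the reduction at $x_i$" are matched correctly via the normalization $a$; this is a routine bookkeeping check using that $\val_{v_i}(a_j f_j) = \trop(f_j)(v_i) + \val(a_j)$.
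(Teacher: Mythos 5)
There is a genuine gap in your argument, located exactly at the step you yourself flag as "the main obstacle": the claim that the minimum defining $\psi'(v_i)$ is achieved by a \emph{unique} index $j_0$, and that the reduction of $\sum_j a_j f_j$ in $\kappa(X_i)$ is a scalar multiple of $\overline{f_{j_0}}$. Neither the linear independence of the reductions $\overline{f_j}$ nor the hypothesis that they have distinct orders of vanishing at $x_i$ rules out ties in the values $\trop(f_j)(v_i) + \val(a_j)$: the coefficients $a_j$ are arbitrary, so one can easily arrange, say, all of these quantities to be equal. (Linear independence of the $\overline{f_j}$ controls cancellation in the \emph{sum} of the reductions, which is what guarantees $\psi(v_i) = \psi'(v_i)$; it says nothing about whether the set $J = \{j : \trop(f_j)(v_i) + \val(a_j) \text{ is minimal}\}$ is a singleton.) Consequently, the assertion that the reduction of $\sum_j a_j f_j$ is a unit times $\overline{f_{j_0}}$ is false in general: that reduction is a nontrivial $\kappa$-linear combination $\sum_{j \in J} u_j \overline{f_j}$ over the whole tie set $J$.

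Your overall strategy --- reduce the proposition to showing $s_i(\psi) = s_i(\psi')$ and compute both slopes via the Poincar\'e--Lelong formula --- is a legitimate, more computational alternative to the argument in the paper, and it can be repaired. What you actually need is: (a) among the tied indices $J$, the function $\psi'$ near $v_i$ (on a punctured interval $(v, v_i)$, \emph{not} at $v_i$ itself) is given by the unique $j_0 \in J$ whose $\trop(f_{j_0})$ has the smallest incoming slope $s_i$, i.e., the smallest $\ord_{x_i}(\overline{f_{j_0}})$ among $j \in J$; and (b) because the orders of vanishing of the $\overline{f_j}$ at $x_i$ are pairwise distinct, $\ord_{x_i}\big(\sum_{j\in J} u_j \overline{f_j}\big) = \min_{j\in J} \ord_{x_i}(\overline{f_j})$, with no cancellation of leading terms. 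These two give $s_i(\psi) = s_i(\psi')$ even when $|J| > 1$. The paper avoids this bookkeeping entirely: it observes that $\psi = \psi'$ at every point where the minimum defining $\psi'$ is attained only once, and then uses the distinct incoming slopes (via Poincar\'e--Lelong) only to conclude that on a small interval $(v, v_i)$ the $r+1$ affine functions $\trop(f_j) + \val(a_j)$ are pairwise distinct, so the minimum is unique there. That route never needs to compute $s_i(\psi)$ or to analyze the reduction of the sum at all, which is why it sidesteps the issue you ran into.
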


\begin{proof}
The two functions $\psi$ and $\psi'$ agree at any point $v$ where the minimum of $\{ \trop(f_0)(v) + \val(a_0), \ldots, \trop(f_r)(v) + \val(a_r) \}$ occurs only once.  By construction, the reductions of $f_0, \ldots, f_r$ in $\kappa(X_i)$ have distinct orders of vanishing at $x_i$, so the Poincar\'e-Lelong formula says that $\trop(f_0), \ldots, \trop(f_r)$ have distinct incoming slopes at $v_i$ along $e_i$.  It follows that the minimum occurs only once on some open interval $(v,v_i)$, and $\psi$ and $\psi'$ agree on this interval.
\end{proof}

The final ingredient in our proof of Theorem~\ref{Thm:MainThm} is the following proposition relating slopes along bridges to shapes of divisors in a linear series on $\Gamma_V$.

\begin{proposition} \label{Prop:ChipsOnEachLoop}
Let $D$ be an effective divisor of degree at most $2g - 2$ on $\Gamma_V$, and let $\psi_0, \ldots, \psi_r \in R(D)$ be piecewise linear functions with distinct incoming slopes at $v_i$ along $e_i$, for some $1 \leq i \leq g$.  Then at most one of the divisors $D + \ddiv(\psi_0), \ldots, D + \ddiv(\psi_r)$ contains no point in $\gamma_i$.
\end{proposition}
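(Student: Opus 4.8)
The plan is to argue by contradiction: suppose that two of the divisors, say $D + \ddiv(\psi_a)$ and $D + \ddiv(\psi_b)$ with $a \neq b$, contain no point in $\gamma_i$. The idea is to build a divisor $D'$ supported on the loops $\gamma_1, \ldots, \gamma_g$, with a point on every loop, equivalent to a divisor dominated by $K_{\Gamma_V}$, and thereby contradict (the analogue on $\Gamma_V$ of) Lemma~\ref{Lem:CanonicalDivisorsSkipLoops}. Since the $\psi_j$ have distinct incoming slopes at $v_i$ along $e_i$, exactly one of them, say $\psi_c$, has the smallest such slope; note $c$ may or may not be one of $a,b$. The first step is to understand where the chips of $D + \ddiv(\psi_j)$ sit near $v_i$: because $\psi_j \in R(D)$, the divisor $D + \ddiv(\psi_j)$ is effective, and the distinctness of slopes at $v_i$ forces structural restrictions on how the $\psi_j$ interact on the bridge $e_i$ and the loop $\gamma_i$.

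The key step is to form $\theta = \min\{\psi_a, \psi_b\}$ after adding a suitable real constant to one of them, chosen so that the slopes of $\psi_a$ and $\psi_b$ on $e_i$ are compared correctly. By Lemma~\ref{Lem:MinChips}, $D + \ddiv(\theta)$ contains, in each region $\Gamma_j$ where $\theta = \psi_j$, only the points of $D+\ddiv(\psi_j)$ lying there, together with the boundary of $\Gamma_j$. Since both $D+\ddiv(\psi_a)$ and $D+\ddiv(\psi_b)$ miss $\gamma_i$, the only way $D+\ddiv(\theta)$ can acquire a point in $\gamma_i$ is at a boundary point of one of the two regions inside $\gamma_i$. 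The slope analysis at $v_i$ — specifically, the fact that $\psi_a$ and $\psi_b$ have different incoming slopes along $e_i$ — is what pins down on which side of $v_i$ the transition occurs, and hence whether a boundary point is created in $\gamma_i$ or pushed onto the bridge $e_i$ or onto $v_i$ itself. I expect that, whichever of $\psi_a, \psi_b$ has the larger incoming slope at $v_i$, the minimum $\theta$ must switch to the other one somewhere along $e_i$ (or at $v_i$), so that $D + \ddiv(\theta)$ still contains no point in the open loop $\gamma_i$; meanwhile $\theta \in R(D)$ by the tropical module structure on $R(D)$, so $D + \ddiv(\theta)$ remains effective of the same degree.

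Having produced a single function $\theta$ with $D + \ddiv(\theta)$ effective, missing $\gamma_i$, one then wants to iterate or package the argument to contradict a Riemann-Roch-style bound. Concretely: since $D$ has degree at most $2g-2$, compare with $K_{\Gamma_V}$; the divisor $D + \ddiv(\theta)$ has degree $\deg(D) \le 2g - 2$ and is effective, so it lies in a complete linear series, and if it avoids $\gamma_i$ we can try to further reduce it toward a $w_g$-type reduced form. The main obstacle — and the step I expect to be most delicate — is the bookkeeping at the vertex $v_i$ and along the bridge $e_i$: one must rule out the possibility that the "extra chip" forced by Lemma~\ref{Lem:MinChips} lands precisely at $v_i$ or is absorbed into the bridge in a way that doesn't help, and one must correctly handle the interaction with the chip already present at $v_i$ in $D$ (if any). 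This is exactly the kind of slope-tracking along bridges, fed by the Poincaré-Lelong formula, that the introduction flags as the technical heart of the positive-$\rho$ case; here it appears in its purest local form, and getting the inequality on incoming slopes at $v_i$ to have the right sign is the crux.
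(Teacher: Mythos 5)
Your proposal heads in the wrong direction and misses the essential hypothesis. The statement is purely \emph{local} to the $i$th loop: the hypothesis gives you information only about what happens on $\gamma_i$, so there is no way to manufacture a divisor ``with a point on every loop'' to feed into Lemma~\ref{Lem:CanonicalDivisorsSkipLoops}. That lemma is the target of the final contradiction in the proof of Theorem~\ref{Thm:MainThm}, but Proposition~\ref{Prop:ChipsOnEachLoop} is a local ingredient that must be proved first, by a different mechanism. Moreover, nowhere in your sketch do you use the genericity of the edge lengths $\ell_i/m_i$, which is not optional: if $\ell_i/m_i$ were a small rational ratio, the conclusion would simply be false, because one could fire chips around the $i$th loop and produce several inequivalent slopes whose divisors all avoid $\gamma_i$. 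Any proof must quote this hypothesis, and yours never does. (You also invoke Poincar\'e--Lelong, which plays no role here; this proposition is entirely combinatorial.)

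The paper's actual argument goes as follows, and the contrast is instructive. Restrict everything to $\Gamma' = \gamma_i \cup \{w_i\} \cup [v, v_i]$, a loop with a short tail, where $v$ is close enough to $v_i$ that $D$ has no chips on $[v, v_i)$ and all the $\psi_j$ have constant slope there. If both $D + \ddiv(\psi_j)$ and $D + \ddiv(\psi_k)$ miss $\gamma_i$, then after restriction both are supported only at $v$ and $w_i$, so their difference is a principal divisor of the form $(s_i(\psi_j) - s_i(\psi_k))(v - w_i)$ on $\Gamma'$. Bounding the slope difference by $\deg D \le 2g-2$ and using that $v - w_i$ has infinite order in $\Pic^0(\Gamma')$ unless $\ell_i/m_i$ is a ratio of small integers, genericity forces $s_i(\psi_j) = s_i(\psi_k)$, contradicting distinctness. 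Your $\theta = \min\{\psi_a, \psi_b\}$ construction, by contrast, has no clear payoff: the claim that $D + \ddiv(\theta)$ ``still contains no point in the open loop $\gamma_i$'' is not justified (the min could switch in the interior of the loop, creating a chip there by Lemma~\ref{Lem:MinChips}), and even if it were true, a single divisor avoiding one loop is not by itself contradictory. The subtraction-on-the-restricted-graph step and the appeal to genericity of $\ell_i/m_i$ are the two ideas your sketch is missing.
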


\begin{proof}
Let $\Gamma'$ be the union of the $i$th loop together with a small closed subsegment of $[v, v_i] \subset [w_{i-1}, v_i]$ along which $\psi_0, \ldots, \psi_r$ all have constant slope.  We may choose $v$ sufficiently close to $v_i$ so that $D$ contains no points in $[v,v_i)$.  Let $D' = D|_{\Gamma'}$ and $\psi_j' = \psi_j|_{\Gamma'}$.  Note that the coefficient of $v$ in $\ddiv(\psi'_j)$ is $-s_i(\psi_j)$, and $D' + \ddiv(\psi'_j)$ agrees with $D + \ddiv(\psi_j)$ on $\gamma_i$.  We now show that at most one of the divisors $D' + \ddiv(\psi'_j)$ contains no point in $\gamma_i$.

Suppose $D' + \ddiv(\psi'_j)$ and $D' + \ddiv(\psi'_k)$ both contain no point in $\gamma_i$.  Then both of these divisors are supported at $v$ and $w_i$.  Subtracting one from the other, we find an equivalence of divisors
\[
(s_i(\psi_j) - s_i(\psi_k)) \, v \ \sim \ (s_i(\psi_j) - s_i(\psi_k)) \, w_i
\]
on $\Gamma'$.  Note that $s_i(\psi_j)$ is bounded above by the sum of the coefficients of $D$ at points to the left of $v_i$ and bounded below by minus the sum of its coefficients at $v_i$ and to the right.  Similarly, $-s_i(\psi_k)$ is bounded above by the sum of the coefficients of $D$ at $v_i$ and to the right, and bounded below by minus the sum of its coefficients at points to the left of $v_i$.  Therefore, $|s_i(\psi_j) - s_i(\psi_k)|$ is bounded by the degree of $D$. The equivalence above then implies that $\ell_i / m_i$ is a ratio of two positive integers whose sum is less than or equal to the degree of $D$, contradicting the genericity hypothesis on the edge lengths.
\end{proof}

\begin{proof}[Proof of Theorem~\ref{Thm:MainThm}]
Suppose the multiplication map
\[
\mu: \cL(D_X) \otimes \cL(E_X) \rightarrow \cL(K_X)
\]
has nonzero kernel.  For $1 \leq i \leq g$, let $\{f_0^i, \ldots, f_r^i\}$ be a basis for $\cL(D_X)$ consisting of rational functions whose reductions in $\kappa(X_i)$ have distinct orders of vanishing at $x_i$ and whose reductions in $\kappa(X_{i+1})$ are linearly independent.  Similarly, let $\{g_0^i, \ldots, g_{g-d+r-1}^i\}$ be a basis for $\cL(E_X)$ consisting of rational functions whose reductions in $\kappa(X_i)$ have distinct orders of vanishing at $x_i$ and whose reductions in $\kappa(X_{i+1})$ are linearly independent.

Fix an element in the kernel of $\mu$.  Then, for each $i$, we can express this element uniquely as a sum of elementary tensors
\[
\sum_{j,k} a^i_{j,k} f_j^i \otimes g_k^i.
\]
Define a piecewise linear function
\[
\theta_i =  \min_{j,k} \{ \trop (f_j^i) + \trop (g_k^i)  + \val (a^i_{j,k})\},
\]
and note that the minimum must occur at least twice at every point in $\Gamma_V$.

Replacing $\{f_0^i, \ldots, f_r^i \}$ by $\{ a\hspace{-0.1em}f_0^i, \ldots, a\hspace{-0.1em}f_r^i \}$ for some $a \in K^*$, we may assume that $\theta_i(v_{i+1}) = \theta_{i+1}(v_{i+1})$ for $1 \leq i < g$ and proceed by \emph{patching} these piecewise linear functions together.

Let $\theta$ be the unique continuous piecewise linear function on $\Gamma_V$ that agrees with $\theta_i$ between $v_{i}$ and $v_{i+1}$ for $1 \leq i \leq g$.  A priori, it is not clear whether $\theta$ is in the tropical linear series $R(D+E)$, where
\[
D = \Trop(D_X) \mbox{ \ \ and \ \ } E = \Trop(E_X).
\]
Nevertheless, we claim not only that $D + E + \ddiv(\theta)$ is effective but also that it contains a point in $\gamma_i$, for $1 \leq i \leq g$.  (Note that $\theta$ may or may not be the tropicalization of a rational function in $\cL(D_X + E_X)$.)

First we show that $D + E + \ddiv (\theta)$ is effective.  In the open subgraph between $v_i$ and $v_{i+1}$, the divisor $D + E + \ddiv(\theta)$ agrees with $D + E + \ddiv (\theta_i)$, which is effective because $R(D+E)$ is a tropical module that contains $\trop(f_j^i) + \trop(g_k^i)$ for all $j$ and $k$.  It remains to check that the coefficient of $v_i$ is nonnegative.  Since $D + E + \ddiv(\theta_i)$ is effective, it will suffice to show
\[
s_i(\theta_{i-1}) \geq s_i(\theta_i) .
\]
We prove this by changing coordinates in two steps, first replacing the basis $\{ f_j^i \}_j$ with $\{ f_j^{i-1} \}_j$ and then replacing the basis $\{g_k^i \}_k$ with $ \{g_k^{i-1} \}_k$.

Fix $k$, write
\[
\sum_j a^i_{j,k} f_j^i = \sum_j b_{j,k} f_j^{i-1},
\]
and define
\[
\theta' = \min_{j,k} \{ \trop(f_j^{i-1}) + \trop(g_j^i) + \val(b_{j,k}) \}.
\]
Note that
\[
\min_j \{ \trop(f_j^{i-1})(v_i) + \val(b_{j,k}) \} = \min_j \{ \trop(f_j^i)(v_i) + \val(a_{j,k}^i) \},
\]
since the reductions of both $\{f_j^{i} \}_j$ and $\{ f_j^{i-1} \}_j$ in $\kappa(X_i)$ are linearly independent.  By adding the constant $g_k^i(v_i)$ and taking the minimum over all $k$, we see that
\[
\theta'(v_i) = \theta(v_i).
\]

We now examine the slopes $s_i(\theta)$ and $s_i(\theta')$.  At any point $v$ on the edge between $w_{i-1}$ and $v_i$, we have
$$ \trop \big( \sum_j b_{j,k} f_j^{i-1} \big) (v) \geq \min_j  \big \{ \trop (b_{j,k} ) + \trop (f_j^{i-1} ) \big \} (v) .$$
Since this inequality holds with equality at $v_i$, it follows that
$$ s_i \big( \trop \big( \sum_j b_{j,k} f_j^{i-1} \big)\big) \leq s_i \big( \min_j \big \{ \trop (b_{j,k} ) + \trop (f_j^{i-1} ) \big \} \big ) . $$
Now Proposition~\ref{Prop:SumEqualsMin} tells us that, on some nonempty interval $(v, v_i) \subset e_i$,
$$ \trop \big( \sum_j b_{j,k} f_j^{i-1} \big) =  \min_j \big \{ \trop (a_{j,k}^i ) + \trop (f_j^i ) \big \} . $$
Taking the minimum over those values of $k$ for which $\min_j \{ \trop (a_{j,k}^i ) + \trop (f_j^i ) \} (v_i ) + \trop (g_k^i ) (v_i ) = \theta ( v_i )$, we see that
$$ s_i ( \theta_i ) \leq s_i ( \theta' ) . $$

A similar argument, fixing $j$ and replacing the basis $\{g^i_k \}$ with $\{g^{i-1}_k\}$ shows that $s_i(\theta') \leq s_{i}(\theta_{i-1})$, as required.  This proves that $D+ E + \ddiv(\theta)$ is effective.  It remains to show that $D + E + \ddiv(\theta)$ contains a point in each cell $\gamma_1, \ldots, \gamma_g$.

We now show that $D + E + \ddiv(\theta)$ contains a point in $\gamma_i$.  By Proposition~\ref{Prop:ChipsOnEachLoop}, there is at most one index $j$ such that $D + \ddiv(\trop(f_j^i))$ contains no point in $\gamma_i$.  Similarly, there is at most one index $k$ such that $E + \ddiv(\trop(g_k^i))$ contains no point in $\gamma_i$.  Call these indices $j_0$ and $k_0$, respectively, if they exist.  Note that, for $(j,k) \neq (j_0, k_0)$, the divisor $D + E + \ddiv(\trop(f_j^i)) + \ddiv(\trop(g_k^i))$ contains a point in $\gamma_i$.

By hypothesis, the minimum of the piecewise linear functions $\big \{ \trop(f_j^i)) + \ddiv(\trop(g_k^i)) + \val(a_{j,k}^i) \big \}$ occurs at least twice at every point, so
\[
\theta_i = \min_{(j,k) \neq (j_0, k_0)}  \big \{ \trop(f_j^i)) + \ddiv(\trop(g_k^i)) + \val(a_{j,k}^i) \big \}.
\]
Then Proposition~\ref{Prop:Obstruction} says that $D + E + \ddiv(\theta_i)$ contains a point in $\gamma_i$.  Now, $D + E + \ddiv(\theta)$ agrees with $D + E + \ddiv(\theta_i)$ on $\gamma_i \smallsetminus \{ v_i \}$.  Furthermore, since $s_i(\theta_i) \leq s_i(\theta_{i-1})$, the coefficient of $v_i$ in $D + E + \ddiv(\theta)$ is greater than or equal to the coefficient of $v_i$ in $D + E + \ddiv(\theta_i)$.  It follows that $D+ E + \ddiv(\theta)$ also contains a point in $\gamma_i$, as claimed.

Pushing forward the divisor $D+ E + \ddiv(\theta)$ under the natural contraction $\Gamma_V \rightarrow \Gamma$ gives an effective canonical divisor that contains a point in each cell $\gamma_1, \ldots, \gamma_g$.  But this is impossible, by Lemma~\ref{Lem:CanonicalDivisorsSkipLoops}.
\end{proof}

\bibliography{math}

\end{document}